\documentclass[a4paper, 11pt]{article} 
\usepackage[margin=2.4cm, top=2.5cm, bottom=3cm, footskip=2cm]{geometry}
\usepackage{amsmath}		
\usepackage{amsfonts}		
\usepackage{amsthm}
\usepackage[utf8]{inputenc}
\usepackage{color}
\usepackage{tikz}
\usepackage{slashed}
\usetikzlibrary{calc}
\usetikzlibrary{arrows}  
\usetikzlibrary{decorations.markings}
\usetikzlibrary{shapes,snakes}
\usepackage{hyperref} 

\usepackage[figure]{hypcap}
\usepackage{amssymb}		
\usepackage{amstext}		
\usepackage{graphicx}		
\usepackage[utf8]{inputenc}	
\usepackage[T1]{fontenc}	
\usepackage{pdfpages}		
\usepackage{wrapfig}		
\usepackage{here}			
\usepackage{float}			
\usepackage{fancyhdr}
\usepackage{verbatim}		
\usepackage{wasysym}		
\usepackage{siunitx}		
\usepackage[arrow, matrix, curve]{xy} 
\setlength{\headheight}{15pt}
\usepackage{wrapfig,tabularx,multirow,hyperref,icomma}
\usepackage{mathptmx}
\usepackage{lmodern}
\usepackage{courier}
\usepackage{paralist}
\sisetup{locale = DE}		
\usepackage{afterpage}
\usetikzlibrary{arrows.meta}
\usepackage{subcaption}
\usetikzlibrary{decorations.pathmorphing,shapes}
\usepackage[section]{placeins}
{\left.\begin{aligned}}
	{\end{aligned}\right\rbrace}
\usepackage{titlesec}


\theoremstyle{theorem}
\newtheorem{definition}{Definition}[section]

\newtheorem{theorem}[definition]{Theorem}

\newtheorem{lemma}[definition]{Lemma}

\theoremstyle{definition}
\newtheorem{remark}[definition]{Remark}

\theoremstyle{definition}
\newtheorem*{notation}{Notation}

\theoremstyle{definition}




\newcommand{\mmysqrt}{\sqrt[\rightarrow]} 

\renewcommand{\Im}{\mathrm{Im}} 
\renewcommand{\Re}{\mathrm{Re}} 


\newcommand{\x}{\boldsymbol{x}}

\newcommand{\n}{\boldsymbol{n}}

\newcommand{\iin}{\mathrm{in}}
\newcommand{\ssc}{\mathrm{sc}}
\renewcommand{\iint}{\int\hspace{-.2cm}\int}
\newcommand{\aalpha}{\boldsymbol{\alpha}}

\renewcommand{\a}{\mathfrak{a}}
\newcommand{\GO}{\mathrm{GO}}

\usepackage{relsize}


\newcommand{\LHP}{\mathrm{LHP}}
\newcommand{\UHP}{\mathrm{UHP}}


\newcommand\numberthis{\addtocounter{equation}{1}\tag{\theequation}}
\numberwithin{equation}{section}

\graphicspath{{figures/}}

\usepackage{graphicx}
\usepackage{calc}
\usepackage{emptypage}
\usepackage{hyperref}
\usepackage{bashful}
\usepackage{natbib}




\makeatletter
\makeatother
\usepackage{lastpage, hyperref}
\hypersetup{
unicode=false,
pdftoolbar=true,
pdfmenubar=true,
pdffitwindow=false,
pdfnewwindow=true,
colorlinks=true,  			
linkcolor={black!40!blue}, 			
citecolor={black!45!blue}, 
filecolor={black!45!blue}
} 

\newcommand{\contr}{\overset{\mathrm{contr.}}{\sim}}
\newcommand{\psim}{\overset{\phantom{\mathrm{contr.}}}{\sim}}

\renewcommand{\aa}{\mathrm{a}^{\star}}
\newcommand{\bb}{\mathrm{b}^{\star}}
\newcommand{\nn}{\textbf{\textit{n}}^{\star}}

\title{Diffraction by a right-angled no-contrast penetrable wedge: recovery of far-field asymptotics}  
\date{} 
\author{Valentin D. Kunz\thanks{The University of Manchester, Department of Mathematics,  Oxford Road, Manchester, M13 9PL, UK; \hfill \phantom{x}
	University of Bologna, Department of Mathematics, Piazza di Porta S. Donato, 5, 40126 Bologna, Italy} \ and  \ Raphael C. Assier\thanks{The University of Manchester, Department of Mathematics,  Oxford Road, Manchester, M13 9PL, UK} \\[.5em]
\small{Corresponding author: \href{mailto:v_kunz@startmail.com}{v$\_$kunz@startmail.com}}
}

\begin{document}

\maketitle	

\begin{abstract}
We provide a description of the far-field encountered in the diffraction problem resulting from the interaction of a monochromatic plane-wave and a right-angled no-contrast penetrable wedge. To achieve this, we employ a two-complex-variable framework and use the anayltical continuation formulae derived in (Kunz $\&$ Assier, QJMAM, 76(2), 2023) to recover the wave-field's geometrical optics components, as well as the cylindrical and lateral diffracted waves. We prove that the corresponding cylindrical and lateral diffraction coefficients can be expressed in terms of certain two-complex-variable spectral functions, evaluated at some given points.
\end{abstract}

\section{Introduction}

The present paper is a direct follow-up to \citep{KunzAssierAC}, wherein a two-complex-variable approach was employed in order to study the right-angled no-contrast penetrable wedge diffraction problem.  
This is a notoriously difficult problem, with two different wavenumbers inside and outside the wedge. So far, almost all of the methods that proved successful in the context of diffraction by a perfect wedge seem to fail in solving the penetrable wedge diffraction problem and thus, new methods need to be developed. As one of the building blocks of Keller's geometrical theory of diffraction (GTD) \citep{Keller1962, BorovikovKinber1993}, availability of such solution or far-field expansion would have a profound impact on diffraction theory overall. For such far-field expansion to be available, it is imperative to obtain analytical expressions of the corresponding diffraction coefficients, which are describing the amplitude of the diffracted far-field.

Although analytical expressions for the diffraction coefficients describing the cylindrical wave-fields emanating from the penetrable wedge's corner remain to be found, there are several methods to (approximately) compute them within some given regions. We refer to the introductions of \citep{Kunz2021diffraction, KunzAssierAC}, as well as Chapter 4 of \citep{NethercoteThesis} for a (non-exhaustive) overview of the work done on the penetrable wedge diffraction problem.

The paper \citep{KunzAssierAC} is part of an ongoing effort to apply multidimensional complex analysis to wave-diffraction problems, and other work in this direction includes \citep{AssierShanin2019, AssierShanin2021AnalyticalCont, AssierShanin2021VertexGreensFunctions, AssierAbrahams2020, AssierAbrahams2021, Kunz2021diffraction} and \citep{AssierShaninKorolkov2022}. Akin to what is done in the one-complex-variable Wiener-Hopf technique \citep{Noble1958, LawrieAbrahams2007}, whose key aspects are outlined in Fig. \ref{fig:Diagram} (left), in \citep{KunzAssierAC} it was shown that the right-angled no-contrast penetrable wedge diffraction problem admits a reformulation as a functional problem in $\mathbb{C}^2$. This functional problem involves two unknown spectral functions $\Psi_{++}$ and $\Phi_{3/4}$ in terms of which the physical fields $\psi$ and $\phi_{\ssc}$ (describing the scattered wave-fields on the interior and exterior of the wedge, respectively) can be expressed as the following inverse double Fourier-integrals. 
\begin{align}
\psi(\x)  = \frac{1}{4 \pi^2} \iint_{\mathbb{R}^2} \Psi_{++}(\aalpha) e^{- i \x \cdot \aalpha} d \aalpha \text{ and } \phi_{\ssc}(\x)  = \frac{1}{4 \pi^2} \iint_{\mathbb{R}^2} \Phi_{3/4}(\aalpha) e^{- i \x \cdot \aalpha} d \aalpha. \label{eq.Ch5phiRecovery}
\end{align}
Here, we have introduced the notation $\x=(x_1,x_2) \in \mathbb{R}^2$ and $\aalpha = (\alpha_1,\alpha_2) \in \mathbb{C}^2$, which will be used throughout the article. Moreover, following the work of \cite{AssierShanin2019}, the analyticity properties of $\Psi_{++}$ and $\Phi_{3/4}$ were studied in \citep{KunzAssierAC}, and their singularity structure within $\mathbb{C}^2$ was unveiled. 
\begin{figure}[h!]
\centering
\includegraphics[width=\textwidth]{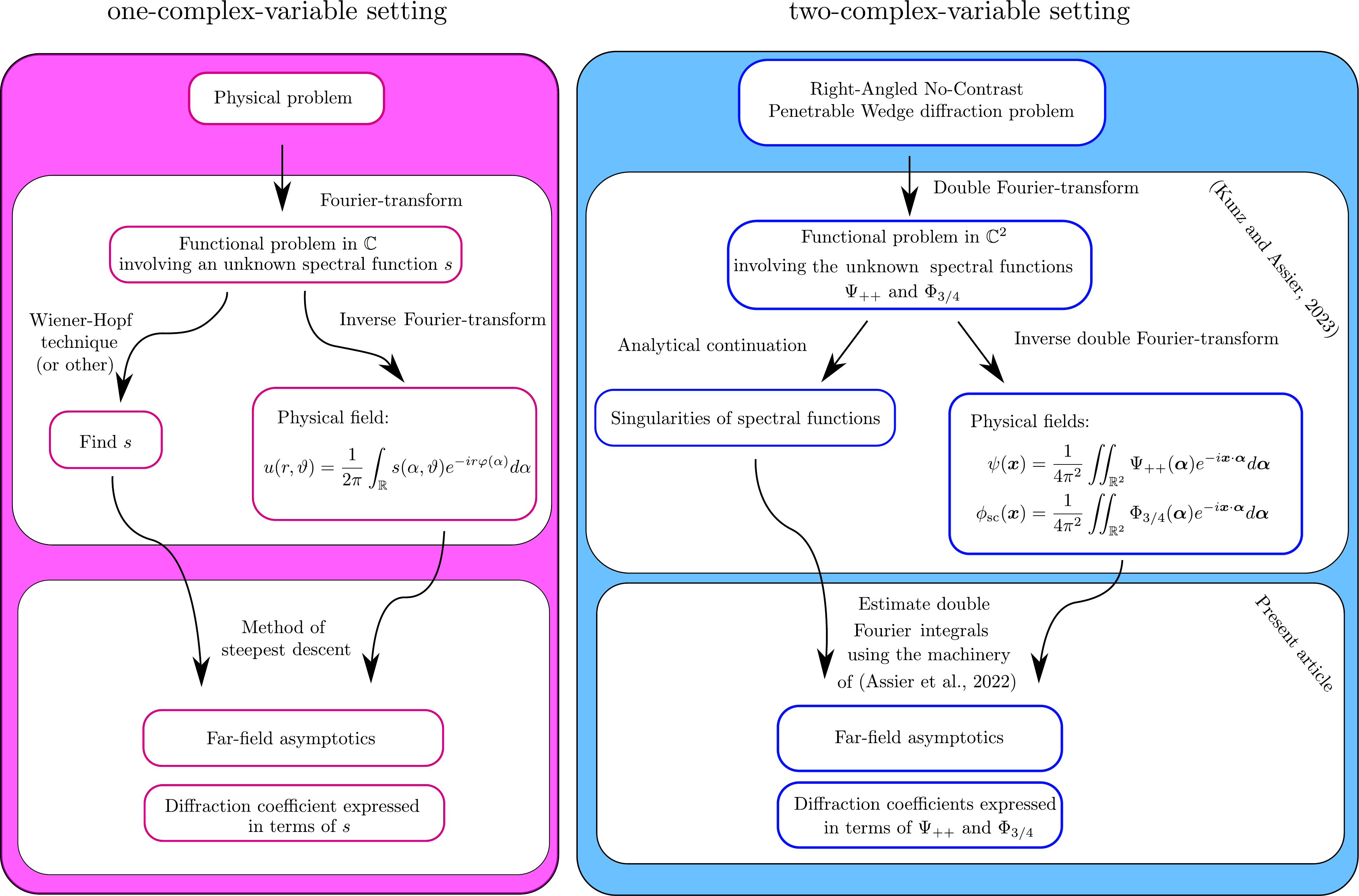}
\caption{Diagrammatic comparison of obtaining far-field asymptotics in the context of the classical, one-complex-variable, setting and the two-complex-variable setting whereon the present article is based.}
\label{fig:Diagram}
\end{figure}
Although the results obtained in \citep{KunzAssierAC} do not yet allow one to complete the two-complex-variable Wiener-Hopf technique, knowledge of the spectral functions' singularities allows for obtaining closed-form far-field asymptotics of the physical wave-fields, which are described by \eqref{eq.Ch5phiRecovery}  and obtaining such far-field asymptotics is the principal goal of the present article. The machinery required to achieve this was recently developed by Assier, Shanin, and Korolkov in \citep{AssierShaninKorolkov2022} and used to recover the far-field asymptotics of the three-dimensional quarter plane problem in \citep{AssierShaninKorolkov2023}. Particularly, in the present article we will show that, akin to the classical one-complex-variable Wiener-Hopf setting, the diffraction coefficients describing the cylindrical corner diffracted waves, as well as the diffraction coefficients describing the so-called \emph{lateral} diffracted waves, can be expressed in terms of the two-complex-variable spectral functions, evaluated at some given points. These aspects are illustrated in Fig. \ref{fig:Diagram}, right.

The content of the present article is organised as follows. After formulating the physical diffraction problem in Section \ref{sec:ProblemFormulation} and introducing some notation in Section \ref{sec:ImportantFct}, we give an informal description of its far-field in Section \ref{sec:InformalFarField} by using Keller's GTD. This article's main goal is to rigorously prove the correctness of such far-field expansion. In Section \ref{sec:Ch5Background}, the theoretical framework required to achieve this is provided, and throughout Sections \ref{sec:Simple} and \ref{sec:Complicated}, closed-form far-field asymptotics of $\psi$ and $\phi_{\ssc}$ which are in agreement with Keller's GTD are derived. We show that both the cylindrical and lateral diffraction coefficients can be expressed in terms of the spectral functions $\Psi_{++}$ and $\Phi_{3/4}$, evaluated at some given points. 
Another aim of this article is related to Sommerfeld's radiation condition. In the context of diffraction by wedges, Sommerfeld's radiation condition relies on a-priori knowledge of the physical wave-field's geometrical optics (GO) components, which, in some sense, serve as some additional boundary conditions as was pointed out by \cite{Stoker1956}. In Section \ref{sec:Complicated}, we give a formulation of the radiation which does not require a-priori knowledge of these components. 

\phantom{2}

\section{Problem formulation}\label{sec:ProblemFormulation}

Consider a plane-wave $\phi_{\iin}$ incident on an infinite, right-angled, penetrable wedge (PW) given by
\[
\text{PW} = \{(x_1,x_2) \in \mathbb{R}^2| \ x_1 \geq 0, x_2 \geq 0 \},
\]
as illustrated in Fig. \ref{fig:transparentWedge} (left).

\begin{figure}[h!]
\centering
\includegraphics[width=.8\textwidth]{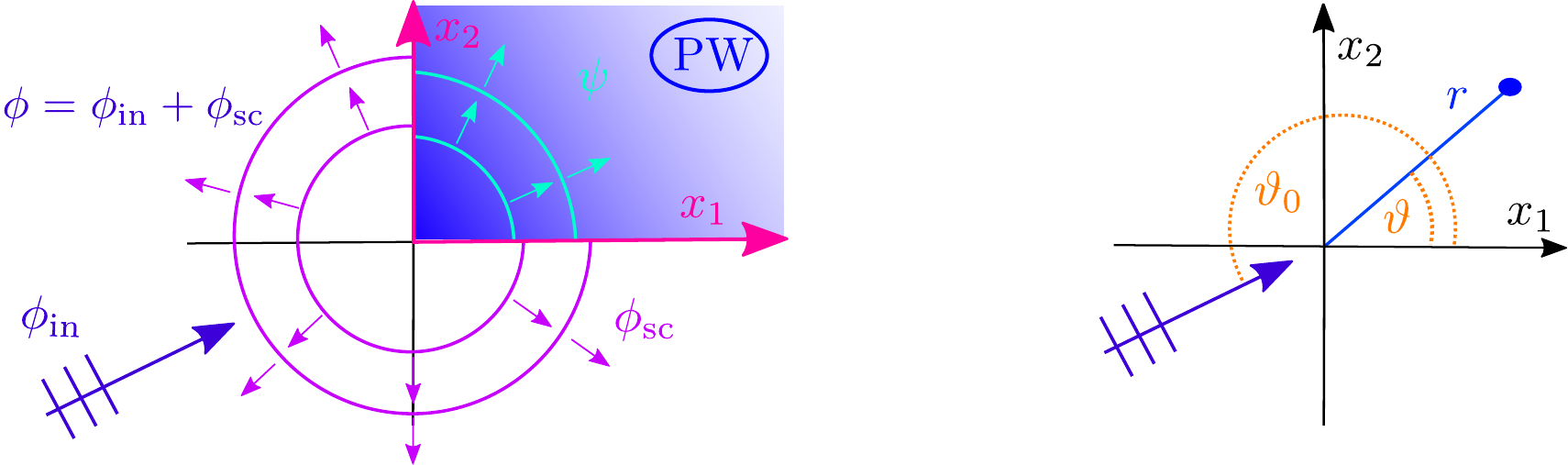}
\caption{\small Left: Illustration of the problem described by equations \eqref{eq:1.1}--\eqref{eq:1.6}, taken from \citep{KunzAssierAC}. Right: Polar coordinate system and incident angle $\vartheta_0$ of $\phi_{\iin}$.} 
\label{fig:transparentWedge}
\end{figure}

We  assume transparency of the wedge and thus expect a scattered field $\phi_{\mathrm{sc}}$ in $\mathbb{R}^2 \setminus \text{PW}$ and a transmitted field $\psi$ in $\text{PW}$. Moreover, we assume  time-harmonicity with the $e^{-i\omega t}$ convention, and the time-dependence is henceforth suppressed. Therefore, the wave-fields' dynamics are described by two Helmholtz equations, and the incident wave (only supported within $\mathbb{R}^2 \setminus \text{PW}$) is given by
\[
\phi_{\iin}(\x) = e^{i \boldsymbol{k}_1 \cdot \x}
\]
where $\boldsymbol{k}_1 \in \mathbb{R}^2$ is the incident wave vector and $\x = (x_1,x_2) \in \mathbb{R}^2$. Additionally,
we are describing a \emph{no-contrast penetrable wedge}, meaning that the \emph{contrast parameter} $\lambda$ satisfies 
\[
\lambda =1.
\]
This contrast parameter corresponds to either the ratio of magnetic permittivities or electric permeabilities, in the electromagnetic setting, or to the ratio of densities, in the acoustic setting. We refer to \citep{Kunz2021diffraction, KunzAssierAC} for a more detailed description of the physical context.

Let $k_1=|\boldsymbol{k}_1| =\omega/c_1$ and $k_2=\omega/c_2$ denote the {wavenumbers} inside and outside  PW, respectively, where $c_1$ and $c_2$ are the wave speeds relative to the media in $\mathbb{R}^2 \setminus \text{PW}$ and $\text{PW}$, respectively. Note that, although $\lambda =1$, we assume that $c_1 \neq c_2$, and so $k_1 \neq k_2$. Setting $\phi= \phi_{\mathrm{sc}} + \phi_{\iin}$ (the total wave-field in $\mathbb{R}^2 \setminus \text{PW}$), and letting $\n$ denote the inward pointing normal on $\partial \text{PW}$, the diffraction problem at hand is then described by the following equations. 

\begin{alignat}{3}
\Delta \phi + k^2_1 \phi &=0  &&\text{ in }   \mathbb{R}^2 \setminus \text{PW}, \label{eq:1.1} 	\\
\Delta \psi + k^2_2 \psi &=0  &&\text{ in }   \text{PW}, \label{eq:1.2} \\		
\phi&= \psi  &&\text{ on } \partial \text{PW}, \label{eq:1.3} \\
\partial_{\n} \phi &= \partial_{\n} \psi  &&\text{ on } \partial \text{PW}. \label{eq:1.6}	
\end{alignat}	

In the electromagnetic setting, $\phi$ and $\psi$ correspond either to the electric
or magnetic field (depending on the polarization of the incident wave, cf.\!\! \citep{Radlow1964PW,KrautLehmann1969})  in $\mathbb{R}^2 \setminus \text{PW}$ and $\text{PW}$, respectively, whereas in the acoustic setting, $\phi$ and $\psi$ represent the total pressure in  $\mathbb{R}^2 \setminus \text{PW}$ and $\text{PW}$, respectively.

Introducing polar coordinates $(r,\vartheta)$, we  rewrite the incident wave vector as $\boldsymbol{k}_1 = -k_1(\cos(\vartheta_0),\sin(\vartheta_0))$ where $\vartheta_0$ is the incident angle as shown on Fig. \ref{fig:transparentWedge}, right. We rewrite the incident wave as
\begin{align}
\phi_{\iin} = e^{-i(\mathfrak{a}_1x_1 + \mathfrak{a}_2x_2)} \label{eq:IncidentRewritten}
\end{align}
with 
\begin{align}
\mathfrak{a}_1 = k_1 \cos(\vartheta_0) \text{ and } \mathfrak{a}_2 = k_1 \sin(\vartheta_0). \label{eq.a_1,2Definition}
\end{align} 

For the problem to be well posed, and uniquely solvable, we also require the fields to satisfy the Sommerfeld radiation condition, meaning that the wave-field should be outgoing in the far-field, and edge conditions called `Meixner conditions', ensuring finiteness of the wave-field's energy near the tip  (see \citep{BabichMokeeva2008}).  The edge conditions are given by
\begin{align}
&	\phi(r, \vartheta) = B +\left(A_1\sin(\vartheta) + B_1\cos(\vartheta)\right)r + \mathcal{O}(r^2) \ \text{as} \ r \to 0, \label{eq.2.56} \\	
&	\psi(r, \vartheta) = B + \left(A_1\sin(\vartheta) + B_1 \cos(\vartheta)\right)r + \mathcal{O}(r^2) \ \text{as} \ r \to 0 \label{eq.2.57}.
\end{align}
Note that \eqref{eq.2.56} and \eqref{eq.2.57} are only valid when $\lambda =1$, and we refer to \citep{Nethercote2020HighContrastApprox} for the general case. 

To formulate the radiation condition, we henceforth consider two different cases, coined the `simple case' and the `complicated case', respectively. In these respective cases, different types of GO components are present as illustrated in Fig. \ref{fig:Wedge04}. The simple case corresponds to $\vartheta_{0} \in (\pi, 3\pi/2)$, which ensures that, for positive imaginary part $\varkappa = \Im(k_1) = \Im(k_2) >0$ of the wavenumbers, we have $\Im(\a_1) <0$ and $\Im(\a_2) <0$. In this case, the formulation of the radiation condition is straightforward: For $\varkappa >0$, the scattered and transmitted fields, $\phi_{\ssc}$ and $\psi$, decay exponentially as $r \to \infty$. In the complicated case, when $\vartheta_{0} \in (\pi/2, \pi)$ or $\vartheta_0 \in (3 \pi/2, 2\pi)$, formulating the radiation condition on the scattered and transmitted fields is not so straightforward, and will be discussed in detail in Section \ref{sec:Complicated}. 

Throughout this article, we assume that $\vartheta_{0} \neq \pi$ and that $\vartheta_0 \neq 3\pi/2$ (i.e. we do not consider the case of grazing incidence), and we assume that $\vartheta_{0} \notin (0, \pi/2)$ (i.e. the wave is incident \emph{on} the wedge). The reason for these assumptions will be explained in Sections \ref{sec:Simple} and \ref{sec:Complicated}. Moreover, due to the symmetry of the wedge, we assume without loss of generality that $\vartheta_{0} \in (\pi/2, 5 \pi/4]$. 

\section{An important function}\label{sec:ImportantFct}

\begin{figure}[h!]
\centering
\includegraphics[width=\textwidth]{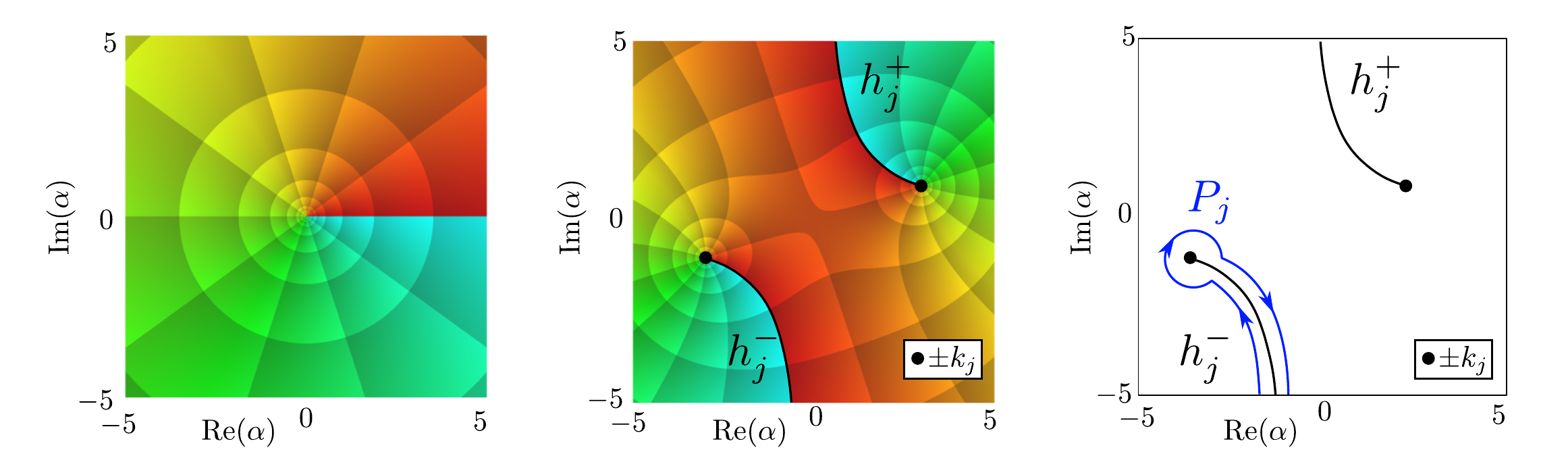}
\caption{Phase portraits of the functions $\mmysqrt{\alpha}$ (left) and $\mmysqrt{k_j^2 - \alpha^2}$ ($j=1$ or $j=2$) for $k_j=3+i$ (centre), illustration of the branch cuts $h^+$ and $h^-$ (centre and right), and of the contour $P_j$ (right).}
\label{fig:SqrtAndMysqrtKappa}
\end{figure}

Before we proceed, let us introduce the function $\mmysqrt{\alpha}$, which will be extensively used throughout the remainder of this article. We define it as the square root with branch cut on the positive real axis, and with branch determined by $\sqrt[\rightarrow]{1}=1$ (i.e.\! $\arg(\alpha) \in [0, 2\pi)$). In particular, $\Im(\sqrt[\rightarrow]{\alpha}) \geq 0$ for all $\alpha$ and $\Im(\sqrt[\rightarrow]{\alpha})=0$ if, and only if, $\alpha \in (0,\infty)$. For fixed $k_j$ (where $j=1,2$), the function
$\mmysqrt{k_j^2 - \alpha^2}$ has two branch points, at $\alpha = + k_j$ and $\alpha = - k_j$, respectively, and two corresponding branch cuts $h_j^+$ and $h_j^-$, which are given by
\begin{align}
h_j^+ = \left\{ \mmysqrt{k_j^2 - x^2}\  | \ x \in \mathbb{R} \right\} \quad \text{ and }  \quad
h_j^- = \left\{ - \mmysqrt{k_j^2 - x^2}\ | \ x \in \mathbb{R} \right\}. \label{eq.Ch1h-Def}
\end{align}
The functions $\mmysqrt{\alpha}$, $\mmysqrt{k_j^2 - \alpha^2}$, and the branch cuts $h_j^+$ and $h_j^-$ are visualised in Fig. \ref{fig:SqrtAndMysqrtKappa}. Moreover, let us define the contour $P_j$ as the (oriented) boundary of $\mathbb{C} \setminus h^-_j$, see Fig. \ref{fig:SqrtAndMysqrtKappa}. That is, for $j=1,2$, $P_j$ is the contour `starting at $-i\infty$' and moving up along $h^-_j$'s left side, up to $-k_j$, and then moving back towards $-i \infty$ along $h^-_j$'s right side. Intuitively, $P_j$ is just $h^-_j$ but `keeps track' of which side $h^-_j$ was approached from. We set $P = P_1 \cup P_2$ (this contour will be used throughout Sections \ref{sec:Ch5Background}--\ref{sec:Complicated}).

\begin{notation}
Whenever $\alpha \in (0, \infty)$, we write $\mmysqrt{\alpha} = \sqrt{\alpha}$ for simplicity. This is because for such $\alpha$, the function $\mmysqrt{\alpha}$ agrees with the usual square root function on the positive real numbers.
\end{notation}

\section{Informal description of the far-field}\label{sec:InformalFarField}\label{sec:InformalFarFieldWedge}

As $r =|\x| \to \infty$, according to Keller's GTD \citep{Keller1962}, we  expect that the wave-fields can be described by their GO components ($\phi_{\GO}$ and $\psi_{\GO}$, respectively), as well as the corresponding diffracted wave-fields resulting from the interaction of $\phi_{\iin}$ with the wedge's corner. In the far-field, the diffracted field splits into cylindrical and lateral diffracted waves, which we denote by $\phi_{\mathrm{C}}$, $\psi_{\mathrm{C}}$, and $\phi_{\mathrm{L}_1}, \ \phi_{\mathrm{L}_2}$, $\psi_{\mathrm{L}_1}, \ \psi_{\mathrm{L}_2}$, respectively. Therefore, overall, we should obtain
\begin{align}
\phi \sim \phi_{\GO} + \phi_{\mathrm{C}} + \phi_{\mathrm{L}_1} + \phi_{\mathrm{L}_2}, \text{ and } 	\psi \sim \psi_{\GO} + \psi_{\mathrm{C}} + \psi_{\mathrm{L}_1} + \psi_{\mathrm{L}_2}, \text{ as }  r \to \infty, \label{eq.Ch1PWFarfield01}
\end{align}
This article's main endeavour is to prove the correctness of \eqref{eq.Ch1PWFarfield01} and obtain formulae for the wave components $\phi_{\GO}, \ \phi_{\mathrm{C}}, \ \phi_{\mathrm{L}_1}, \ \phi_{\mathrm{L}_2}, \ \psi_{\GO}, \ \psi_{\mathrm{C}}, \ \psi_{\mathrm{L}_1},$ and $\psi_{\mathrm{L}_2}$. This will be the subject of Sections \ref{sec:Simple} (simple case) and \ref{sec:Complicated} (complicated case), respectively.

\begin{figure}[h!]
\centering
\includegraphics[width=\textwidth]{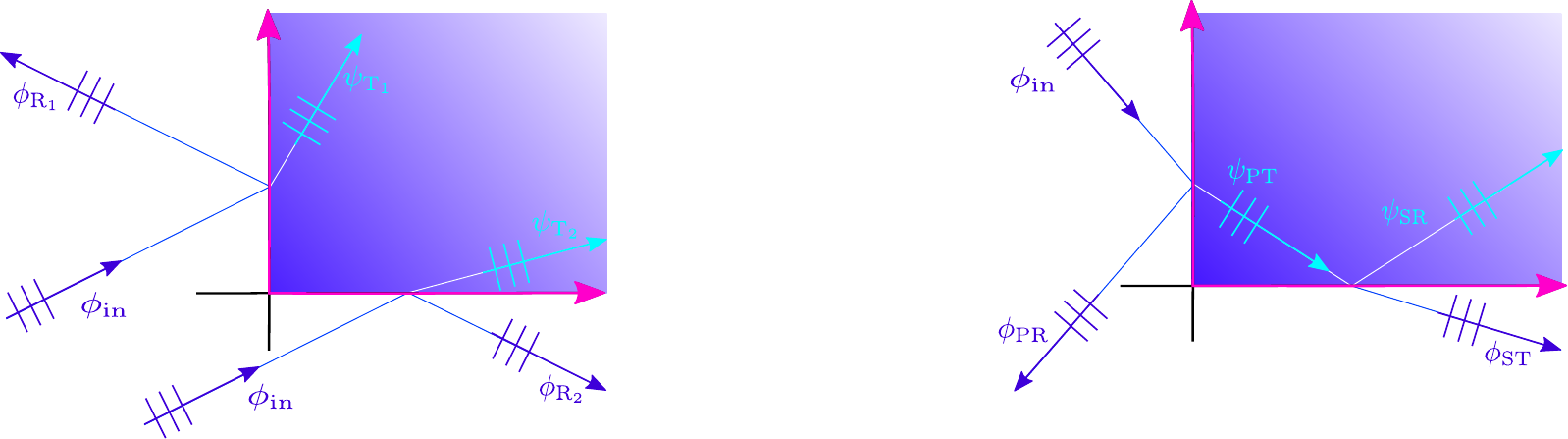}
\caption{Geometrical optics components of the wave-fields $\phi$ and $\psi$ in the simple case (left) and in the complicated case (right). The subscripts $_{\mathrm{R}_j}$, $_{\mathrm{T}_j}$, $j=1,2$ correspond to the two reflected and transmitted plane waves, whereas the subscripts $_{\mathrm{PR}}$, $_{\mathrm{PT}}$, $_{\mathrm{SR}}$, and $_{\mathrm{ST}}$ correspond to `primary reflected',  `primary transmitted', `secondary reflected', and `secondary transmitted', respectively.}
\label{fig:Wedge04}
\end{figure}

The fields' GO components are well understood and explicit formulae for $\phi_{\GO}$ and $\psi_{\GO}$  will be provided throughout Sections \ref{sec:Simple} and \ref{sec:Complicated}. The fields' GO components are displayed in Fig. \ref{fig:Wedge04} left (simple case) and right (complicated case).

The cylindrical diffracted waves $\phi_{\mathrm{C}}$ and $\psi_{\mathrm{C}}$ are a well known type of wave as well. Although no analytical description is available for them, many ways to efficiently compute them have been found. We refer to the introduction of \citep{KunzAssierAC} for an overview of the work done on computing the diffracted far-field. However, the cylindrical diffracted waves are discontinuous across the wedges interface so some additional `lateral waves' are required to ensure continuity of the total wave-fields. These diffracted lateral waves $\phi_{\mathrm{L}_1}, \ \phi_{\mathrm{L}_2}$, $\psi_{\mathrm{L}_1}$, and $\psi_{\mathrm{L}_2}$ are not as well studied. Thus, we briefly introduce this type of wave in the following section.

\subsection{Diffraction by a penetrable interface}\label{sec:PenetrableInterface}\label{sec:LateralInformal}

Consider the two half-spaces $\Omega_1 = \{(x_1,x_2) \in \mathbb{R}^2| \ x_1 >0 \}$ and $\Omega_2 = \{(x_1,x_2) \in \mathbb{R}^2| \ x_1 < 0 \}$ . We will study the diffraction problem resulting from the point source incidence given by
\begin{align}
\widetilde{\phi}_{\iin}(\x) = - \frac{i}{4} H^{(1)}_0(k_1 |\x - \x_0|), \ \x_0 \in \Omega_1.
\end{align}
Without loss of generality, we may assume that the point source is located along the $x_2$-axis, and we can thus write $\x_0 = (0, b), \ b \geq 0$, cf. Fig. \ref{fig:HalfPlaneInterface01}.

\begin{figure}[h!]
\centering
\includegraphics[width=.5\textwidth]{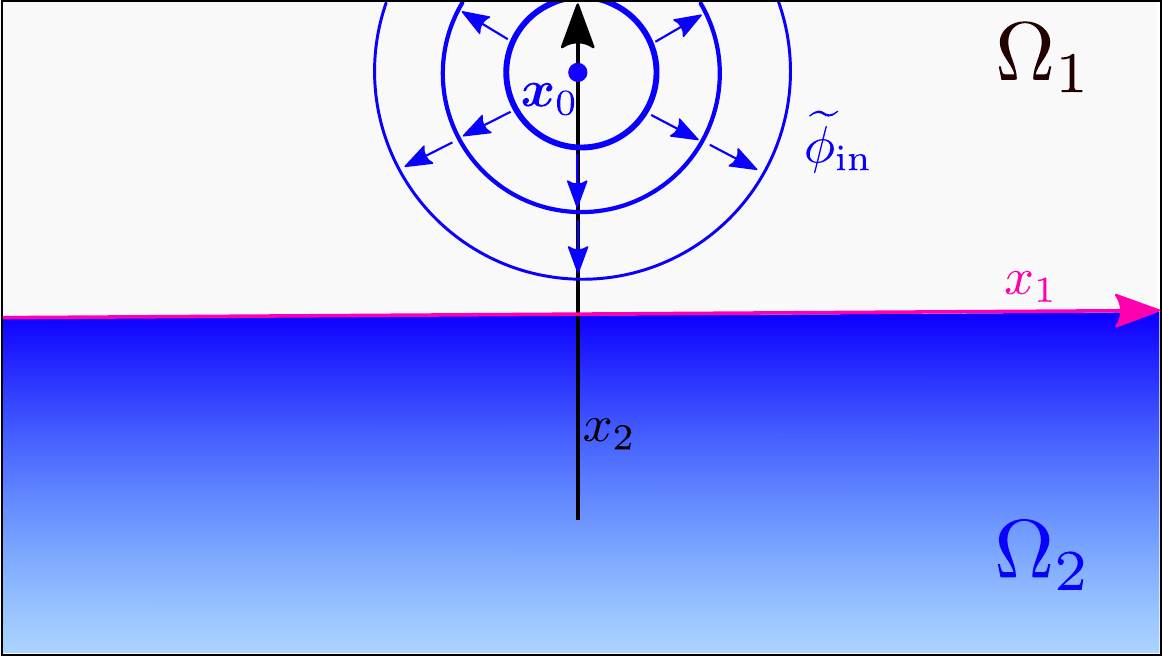}
\caption{Cylindrical wave $\widetilde{\phi}_{\iin}$ emanating from a point source at $\x_0$ in $\Omega_1$, which is incident on a half-space $\Omega_2$ whose boundary is the $x_1$-axis.}
\label{fig:HalfPlaneInterface01}
\end{figure}

Akin to the penetrable wedge diffraction problem formulated in Section \ref{sec:ProblemFormulation}, let us denote the  scattered field within $\Omega_1$ by $\widetilde{\phi}_{\ssc}$, and the total wave-field within $\Omega_2$ by $\widetilde{\psi}$. Denote the wavenumbers within $\Omega_1$ and $\Omega_2$, respectively, by $k_1$ and $k_2$. Then, the solution to the corresponding diffraction problem subject to Sommerfeld's radiation condition is given by\footnote{In the following formulae, the contour of integration is chosen to be $\mathbb{R}$. It is valid for $\Im(k_{1,2})>0$, but  for $\Im(k_1) = \Im(k_2) =0$, it needs to be an indented contour avoiding the integrands' singularities. By imposing that the scattered fields decay exponentially for positive imaginary part of the wave-numbers, finding this indentation is straightforward. We omit the details and refer to \citep{BrekhovskikGodin1999}.}
\begin{align}
&\widetilde{\phi}_{\ssc} (\x)= \frac{i}{ 4\pi} \int_{\mathbb{R}} \frac{\mmysqrt{k_1^2 - \alpha^2}  -  \mmysqrt{k_2^2 - \alpha^2}}{\mmysqrt{k_1^2 - \alpha^2}  +  \mmysqrt{k_2^2 - \alpha^2}} \frac{e^{ i (-\alpha x_1 + \mmysqrt{k_1^2 - \alpha^2}(x_2+b))}}{{\mmysqrt{k_1^2 - \alpha^2}}} d \alpha, \label{eq.HalfPlaneInterface12}\\
&\widetilde{\psi}(\x) = \frac{i}{ 4\pi} \int_{\mathbb{R}} \frac{2\mmysqrt{k_1^2 - \alpha^2}}{\mmysqrt{k_1^2 - \alpha^2}  +  \mmysqrt{k_2^2 - \alpha^2}} \frac{e^{ i (-\alpha x_1 - \mmysqrt{k_2^2 - \alpha^2} x_2 + \mmysqrt{k_1^2 - \alpha^2}b)}}{{\mmysqrt{k_1^2 - \alpha^2}}} d \alpha. \label{eq.HalfPlaneInterface13}
\end{align}
These formulae are well-known and are given, for instance, in \citep{BrekhovskikGodin1999}. We refer to \citep{KunzThesis}, Section 1.3 for their detailed derivation. \\

\noindent \textbf{Far-field asymptotics.} The far-field asymptotics of  $\widetilde{\phi}_{\ssc}$ and $\widetilde{\psi}$ can be obtained by means of the method of steepest descent. We will be brief, and refer to \citep{KunzThesis}, Section 1.3, and to the book of \cite{BrekhovskikGodin1999} for a more detailed discussion. Let us assume that $\Im(k_1) = \Im(k_2) =0$, and that $k_1 > 0$ as well as $k_2 >0$. The integrals in \eqref{eq.HalfPlaneInterface12} and \eqref{eq.HalfPlaneInterface13} have a saddle point at $\alpha = -k_1\cos(\vartheta)$ and $\alpha = -k_2\cos(\vartheta)$, respectively.  From these saddle points, we obtain a reflected and transmitted cylindrical wave which we call $\widetilde{\phi}_{\mathrm{C}}$ and $\widetilde{\psi}_{\mathrm{C}}$, respectively. However, depending on $\vartheta$ and whether $k_1 > k_2$ or $k_2 > k_1$, contributions of the branch points of the integrands need to be taken into account\footnote{The integrands' polar singularities pose no problem, as is explained in \citep{BrekhovskikGodin1999}.}.

For $k_1 > k_2$, these yield the \emph{lateral} waves within $\Omega_1$, whose leading order approximation we denote by  $\widetilde{\phi}_{\mathrm{L}_1}$ and $\widetilde{\phi}_{\mathrm{L}_2}$, and for $k_2 > k_1$ these yield the lateral waves within $\Omega_2$, with corresponding leading order approximation denoted by $\widetilde{\psi}_{\mathrm{L}_1}$ and $\widetilde{\psi}_{\mathrm{L}_2}$. For $k_1 > k_2$, they are given by 	
\begin{align}
\widetilde{\phi}_{\mathrm{L}_1}(r, \vartheta) = 
\frac{i}{2 \sqrt{\pi}}  \frac{  \sqrt{2 k_2}}{(k_1^2 - k_2^2)^{1/4}} e^{i 3 \pi/ 4}  e^{i \sqrt{k^2_1 - k_2^2}b} \frac{e^{i r ( \cos(\vartheta) k_2 + \sqrt{k_1^2 - k_2^2} \sin(\vartheta))} }{|\sqrt{k_1^2 - k_2^2}  r \cos(\vartheta) - k_2 r \sin(\vartheta)|^{3 / 2}}, \label{eq.HalfPlaneInterfaceLateral}
\end{align} 
for $\vartheta \in (0, \arccos(k_2/k_1))$ (and $\widetilde{\phi}_{\mathrm{L}_1} \equiv 0$ otherwise), and 
\begin{align}
\widetilde{\phi}_{\mathrm{L}_2}(r, \vartheta) = 
\frac{i}{2 \sqrt{\pi}}  \frac{  \sqrt{2 k_2}}{(k_1^2 - k_2^2)^{1/4}} e^{i 3 \pi/ 4} e^{i \sqrt{k^2_1 - k_2^2}b} \frac{e^{i r ( - \cos(\vartheta) k_2 + \sqrt{k_1^2 - k_2^2} \sin(\vartheta))} }{|\sqrt{k_1^2 - k_2^2}  r \cos(\vartheta) + k_2 r \sin(\vartheta)|^{3 / 2}}, \label{eq.HalfPlaneInterfaceLateral1}
\end{align}
for $\vartheta \in (\pi - \arccos(k_2/k_1), \pi)$ (and $\widetilde{\phi}_{\mathrm{L}_2} \equiv 0$ otherwise). 

For $k_2 > k_1$, they are given by
\begin{align}
\widetilde{\psi}_{\mathrm{L}_1}(r, \vartheta) = 	 \frac{i}{4\sqrt{\pi}}  \frac{ \sqrt{2 k_1} e^{i 3 \pi/ 4}}{(k_2^2 - k_1^2)^{1/4}}   \frac{e^{i r ( \cos(\vartheta) k_1 - \sqrt{k_2^2 - k_1^2} \sin(\vartheta))} }{|\sqrt{k_2^2 - k_1^2}  r \cos(\vartheta) + k_1 r \sin(\vartheta)|^{3 / 2}}, \label{eq.HalfPlaneInterfaceLateral02} 
\end{align}
and  
\begin{align}
\widetilde{\psi}_{\mathrm{L}_2}(r, \vartheta) = 	 \frac{i}{4\sqrt{\pi}}  \frac{ \sqrt{2 k_1} e^{i 3 \pi/ 4}}{(k_2^2 - k_1^2)^{1/4}}   \frac{e^{i r (-\cos(\vartheta) k_1 - \sqrt{k_2^2 - k_1^2} \sin(\vartheta))} }{|-\sqrt{k_2^2 - k_1^2}  r \cos(\vartheta) + k_1 r \sin(\vartheta)|^{3 / 2}}.
\end{align}
Although $\widetilde{\psi}_{\mathrm{L}_1}$ and $\widetilde{\psi}_{\mathrm{L}_2}$ are independent of $b$, the transmitted field's lateral waves do, in fact, depend on $b$, but this dependence only appears for higher order correction terms.
Whenever $\vartheta \notin (-\arccos(k_1/k_2), 0)$, we have $\widetilde{\psi}_{\mathrm{L}_1} \equiv 0$, and whenever $\vartheta \notin (- \pi, -\pi + \arccos(k_1/k_2))$, we have $\widetilde{\psi}_{\mathrm{L}_2} \equiv 0$. For $j=1,2$, let us set $\widetilde{\phi}_{\mathrm{L}_j} \equiv 0$ whenever $k_1 \leq k_2$ and $\widetilde{\psi}_{\mathrm{L}_j} \equiv 0$ whenever $k_2 \leq k_1$.
Overall, as $r \to \infty$, we then obtain
\begin{align}
\widetilde{\phi}_{\ssc} \sim
\widetilde{\phi}_{\mathrm{C}} + \widetilde{\phi}_{\mathrm{L}_1} + \widetilde{\phi}_{\mathrm{L}_2} \text{ and }
\widetilde{\psi} \sim 
\widetilde{\psi}_{\mathrm{C}} + \widetilde{\psi}_{\mathrm{L}_1} + \widetilde{\psi}_{\mathrm{L}_2},
\end{align}	
where
\begin{align}
\widetilde{\phi}_{\mathrm{C}}(r,\vartheta) =  -\frac{i}{4} \frac{\sqrt{k_1^2 - k_1^2 \cos^2(\vartheta)}  -  \sqrt{k_2^2 - k_1^2 \cos^2(\vartheta)}}{\sqrt{k_1^2 - k_1^2 \cos^2(\vartheta)}  +  \sqrt{k_2^2 - k_1^2 \cos^2(\vartheta)}} 
e^{k_1\sin(\vartheta)b}H^{(1)}_0(k_1 r), \label{eq.HalfPlaneInterface14}
\end{align}	
and
\begin{align}
\widetilde{\psi}_{\mathrm{C}}(r, \vartheta) =   - \frac{i}{2}\frac{ \sqrt{k_1^2 - k_2^2 \cos^2(\vartheta)}}{\sqrt{k_1^2 - k_2^2 \cos^2(\vartheta)} +  \sqrt{k_2^2 - k_2^2 \cos^2(\vartheta)}} e^{i \sqrt{k_1^2 - k_2^2\cos^2(\vartheta)} b} H^{(1)}_0(k_2 r).
\end{align}
The far-field asymptotics of $\widetilde{\phi}$ and $\widetilde{\psi}$ are shown in Fig. \ref{fig:Ch3HeatMap}.
\begin{figure}
\centering
\includegraphics[width=\textwidth]{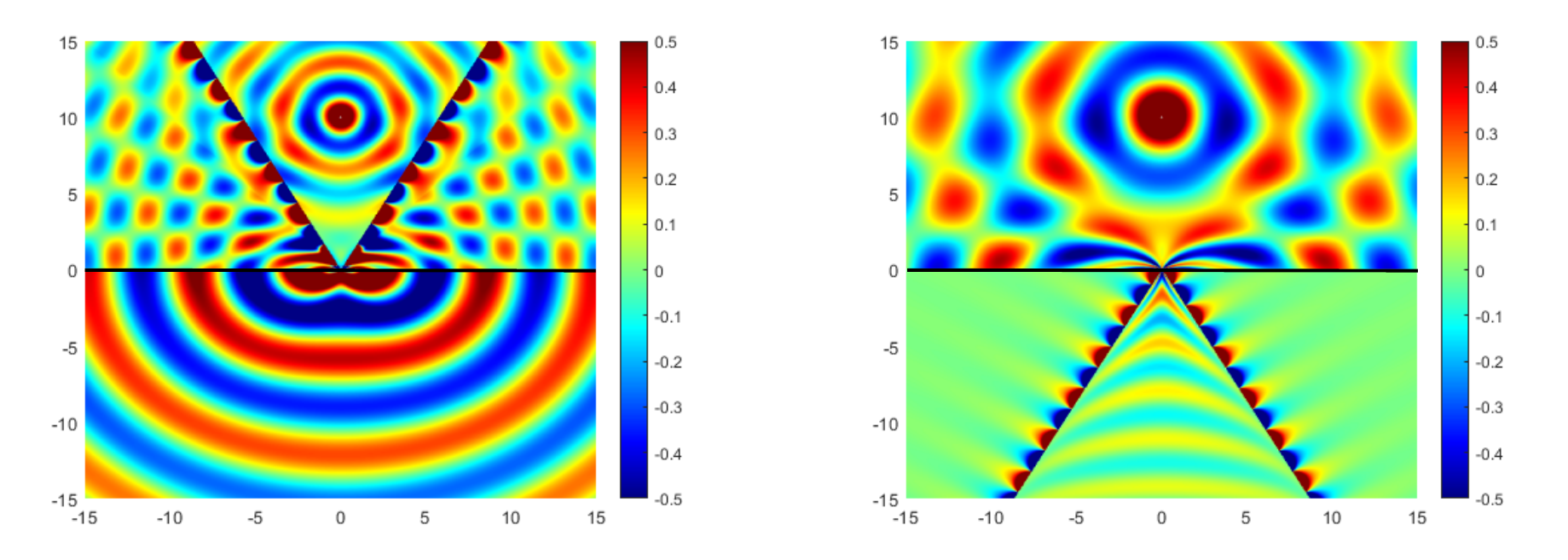}
\caption{Plot of the wave-fields' far-field asymptotics for $k_1=2$ and $k_2=1$ (left), and $k_1=1$ and $k_2=2$ (right). Here, we have $b=10$.}
\label{fig:Ch3HeatMap}
\end{figure}

\begin{notation}
Henceforth, we shall also refer to the lateral waves' leading order approximation as `the' lateral wave. That is, we refer to $\widetilde{\phi}_{\mathrm{L}_j}$ and $\widetilde{\psi}_{\mathrm{L}_j}, \ j=1,2$ as lateral waves, and similarly for the penetrable wedge diffraction problem. This is consistent with the notation used in \eqref{eq.Ch1PWFarfield01}. Since we will only be interested in such leading order approximations, this notation is unambiguous.
\end{notation}

Note that the lateral waves $\widetilde{\phi}_{\mathrm{L}_1}$ and $\widetilde{\phi}_{\mathrm{L}_2}$ decay away from the lines $\vartheta \equiv \arccos(k_2/k_1)$ and $\vartheta \equiv \pi-\arccos(k_2/k_1)$, respectively, whereas the lateral waves $\widetilde{\psi}_{\mathrm{L}_1}$ and $\widetilde{\psi}_{\mathrm{L}_2}$ decay away from the lines $\vartheta \equiv - \arccos(k_1/k_2)$ and $\vartheta \equiv -\pi+\arccos(k_1/k_2)$, respectively. On these lines, the lateral waves are undefined, which is due to the coalescence of the respective branch points and the saddle point of the integrands in equations \eqref{eq.HalfPlaneInterface12} and \eqref{eq.HalfPlaneInterface13}. So, on these lines, the diffracted wave does not split into cylindrical and lateral waves. Such coalescence also occurs for $\vartheta=0$ or $\vartheta = \pi$, so, on the interface, the diffracted wave does not split as well. Finally, note that since the lateral waves are of order $\mathcal{O}(r^{-3/2})$, they decay faster than the corresponding cylindrical waves, which are of order $\mathcal{O}(r^{-1/2})$. 
For more on lateral waves, their physical meaning, and their relevance, we refer to \citep{BrekhovskikGodin1999}, Chapter 3. 

\section{Background and setup}\label{sec:Ch5Background}

To prove the correctness of \eqref{eq.Ch1PWFarfield01}, we will rely on using the frameworks recently developed in \citep{KunzAssierAC} and \citep{AssierShaninKorolkov2022}. Thus, we summarise these papers' key results in Sections \ref{sec:Ch5OSummary01} and \ref{sec:Ch5OSummary02}, respectively.

\subsection{Summary of \citep{KunzAssierAC}}\label{sec:Ch5OSummary01}
As before, let 
$$\varkappa = \Im(k_{1,2})\geq 0$$ 
denote the imaginary part of $k_1$ and $k_2$. In the following formulae, we omit the argument $\varkappa$ for brevity. Let $\aalpha = (\alpha_1, \alpha_2) \in \mathbb{C}^2$, set $\psi \equiv 0$ within $\mathbb{R}^2 \setminus \text{PW}$ and $\phi_{\ssc} \equiv 0$ within $\text{PW}$, and introduce the (unknown) spectral functions $\Psi_{++}$ and $\Phi_{3/4}$ via
\begin{align*}
\Psi_{++}(\aalpha) = \iint_{\mathbb{R}^2} \psi(\x) e^{i \aalpha \cdot \x} d\x, \text{ and } \Phi_{3/4}(\aalpha) = \iint_{\mathbb{R}^2} \phi_{\ssc}(\x) e^{i \aalpha \cdot \x} d\x.
\end{align*}
Then,
according to Theorem 4.2.1 of \citep{KunzAssierAC}, these spectral functions satisfy the following two-complex-variable Wiener-Hopf functional equation
\begin{align}
- K(\aalpha) \Psi_{++}(\aalpha) = \Phi_{3/4}(\aalpha) + P_{++}(\aalpha), \label{eq.Ch5WHeq}
\end{align}
where the kernel $K$ and the forcing $P_{++}$ are given by 
\begin{align}
K(\aalpha) = \frac{k_2^2 - \alpha_1^2 - \alpha_2^2}{k_1^2 - \alpha_1^2 - \alpha_2^2} \quad \text{ and } \quad P_{++}(\aalpha) = \frac{1}{(\a_1 - \alpha_1)(\a_2 - \alpha_2)}\cdot \label{eq.Ch5KPDef}
\end{align}
The Wiener-Hopf equation is valid in the (common) domain of analyticity of $K$, $P_{++}$, $\Psi_{++}$, and $\Phi_{3/4}$, which includes some open neighbourhood $U \subset \mathbb{C}^2$ of $\mathbb{R}^2$. We proceed to describe this analyticity structure. To this end, let us introduce the following sets.
\begin{align*}
\UHP = \{ \alpha \in \mathbb{C}| \ \Im(\alpha) \geq 0 \}, \ \LHP = \{ \alpha \in \mathbb{C}| \ \Im(\alpha) \leq 0 \}, \ 
H^- = \LHP \setminus \left( h^-_1 \cup h^-_2 \right).
\end{align*}
henceforth, when speaking of analyticity of a function on a closed set, it is implied that there is some open neighbourhood of the set's boundary whereon this function is analytic. 
Let
\begin{align}
\mathcal{I}_1(\aalpha; z_1) = \frac{\left(k^2_2 - k_1^2 \right) \Psi_{++}\left(z_1, \mmysqrt{k^2_1 -z^2_1}\right)}{K_{-\circ }(z_1,\alpha_2)(z_1-\alpha_1)\left(\mmysqrt{k^2_1 -z^2_1} - \alpha_2\right)\mmysqrt{k^2_1 -z^2_1}}, \label{eq.I1Def} \\
\mathcal{I}_2(\aalpha; z_2) = \frac{\left(k^2_2 - k_1^2 \right) \Psi_{++}\left(\mmysqrt{k^2_1 -z^2_2}, z_2\right)}{K_{\circ -}(\alpha_1,z_2)(z_2-\alpha_2) \left(\mmysqrt{k^2_1 -z^2_2} - \alpha_1\right)\mmysqrt{k^2_1 -z^2_2}}. \label{eq.I2Def}
\end{align}
Then, in the simple case, for $\varkappa >0$, the following formulae for analytic continuation can be used to prove that the function $\Psi_{++}(\aalpha)$ is analytic within the domain 
\begin{align*}
\left(\UHP \times \mathbb{C} \setminus (h_1^- \cup h_2^- \cup \{\a_2\})  \right) \cup \left( \mathbb{C} \setminus (h_1^- \cup h_2^- \cup \{\a_1\}) \times \UHP  \right).
\end{align*}
\begin{align*}	
\Psi_{++}(\aalpha)= \frac{-i}{4 \pi K_{\circ+}(\aalpha)} \int_{P} \mathcal{I}_2(\aalpha; z_2) dz_2  
-
\frac{K_{-\circ}(\alpha_1,\mathfrak{a}_2)P_{++}(\aalpha)}{K_{\circ+}(\aalpha) K_{\circ-}(\alpha_1,\mathfrak{a}_2)K_{-\circ}(\mathfrak{a}_1,\mathfrak{a}_2)},
\numberthis  \label{eq.Ch5PsiCont}  \\
\Psi_{++}(\aalpha)= \frac{-i }{4 \pi K_{+\circ}(\aalpha)}  \int_{P}  \mathcal{I}_1(\aalpha; z_1) dz_1 
- 
\frac{K_{\circ-}(\mathfrak{a}_1,\alpha_2) P_{++}(\aalpha)}{K_{+\circ}(\aalpha) K_{-\circ}(\mathfrak{a}_1,\alpha_2)K_{\circ-}(\mathfrak{a}_1,\mathfrak{a}_2) } \label{eq.Ch5PsiCont2}, \numberthis 
\end{align*}
where the contour $P$ was defined in Section \ref{sec:ImportantFct}. Formula \eqref{eq.Ch5PsiCont} is valid within the domain $H^-\setminus \{\a_1\} \times \UHP$ whereas formula \eqref{eq.Ch5PsiCont2} is valid within $\UHP \times H^-\setminus \{\a_2\}.$
Upon multiplication by $K$, formulae \eqref{eq.Ch5PsiCont} and \eqref{eq.Ch5PsiCont2} can be used to analytically continue $\Phi_{3/4}$ onto the domain $(H^-\setminus \{\a_1\}) \times (H^- \setminus \{\a_2\})$. 
Recall that we have $
\a_1 = k_1 \cos(\vartheta_0) \text{ and } \a_2 = k_1 \sin(\vartheta_0),  $
where $\vartheta_0$ is the incident angle (cf.\! Section \ref{sec:ProblemFormulation}), and 
\begin{align}
K_{- \circ}(\aalpha) = \frac{\mmysqrt{k_2^2 - \alpha_2^2} - \alpha_1}{\mmysqrt{k_1^2 - \alpha_2^2} - \alpha_1}, \
K_{+ \circ}(\aalpha) = \frac{\mmysqrt{k_2^2 - \alpha_2^2} + \alpha_1}{\mmysqrt{k_1^2 - \alpha_2^2} + \alpha_1}, \label{eq.Kalpha1Fact} \\
K_{\circ - }(\aalpha) = \frac{\mmysqrt{k_2^2 - \alpha_1^2} - \alpha_2}{\mmysqrt{k_1^2 - \alpha_1^2} - \alpha_2}, \
K_{\circ + }(\aalpha) = \frac{\mmysqrt{k_2^2 - \alpha_1^2} + \alpha_2}{\mmysqrt{k_1^2 - \alpha_1^2} + \alpha_2}, \label{eq.Kalpha2Fact}
\end{align}
where the function $\mmysqrt{\alpha}$ is as defined in Section \ref{sec:ImportantFct}, and we have $K=K_{\circ +}K_{\circ -} = K_{+ \circ} K_{- \circ}$.

In \citep{KunzAssierAC}, it was proved that from the spectral functions, the physical fields $\psi$ and $\phi_{\sc}$ can be obtained via \eqref{eq.Ch5phiRecovery}.

As already mentioned, formulae \eqref{eq.Ch5PsiCont}--\eqref{eq.Ch5PsiCont2} have been derived for the simple case $\vartheta_0 \in (\pi, 3 \pi/2)$ and for $\varkappa >0$ only. However, these formulae remain valid in the limit $\varkappa \to 0$, provided that the integration contours are appropriately changed to avoid the singularities of $\Psi_{++}(z_1, \mmysqrt{k_1^2 - z^2_1})$ or $\Psi_{++}(z_1, \mmysqrt{k_1^2 - z^2_1})$ which may hit $P$ in this limit. Now, to recover the physical wave-fields' far-field asymptotics and thus prove the correctness of \eqref{eq.Ch1PWFarfield01}, we need to make sense of  \eqref{eq.Ch5phiRecovery} in the limit $\varkappa \to 0$. Taking this limit is non-trivial, however, and it requires to change the surface $\mathbb{R}^2$ to another surface $\boldsymbol{\Gamma}$, by using the generalised theorem of Stokes, such that $\boldsymbol{\Gamma}$ does not hit any of the spectral functions' singularities during this limit. We refer to \citep{Shabat1991} and \citep{MadsenTornehave1997} for more on Stokes' theorem.  The singularity structure of the spectral functions was thoroughly studied in \citep{KunzAssierAC}, and will be provided in Sections \ref{sec:Simple}, for the simple case, and in Section \ref{sec:Complicated}, for the complicated case.


\subsection{Obtaining far-field asymptotics}\label{sec:Ch5OSummary02}

Since we will rely on using the machinery developed in \citep{AssierShaninKorolkov2022}  in order to recover the physical far-field asymptotics of the scattered and transmitted fields associated to the right-angled no-contrast penetrable wedge diffraction problem, let us briefly summarise this paper's key results.

Consider an integral of the form 
\begin{align}
f(\x; \varkappa) = \frac{1}{4 \pi^2}\iint_{\mathbb{R}^2} F(\aalpha; \varkappa) e^{- i \x \cdot \aalpha} d \aalpha, \label{eq.Ch5ModelIntegral}
\end{align}
where $\varkappa \geq 0$ is some (small) real, positive parameter.
For example, $f = \psi$ or $f= \phi_{\ssc}$ where $\varkappa = \Im(k_2)$ or $\varkappa = \Im(k_1)$, respectively. Let us assume that the singularity set $\sigma$ of $F$ consists (only) of poles and branches. We moreover assume that 
$$\sigma=\cup_j \sigma_j$$
is the union of so-called \emph{irreducible singularities} $\sigma_j$. That is, $\sigma_j = \{\aalpha \in \mathbb{C}^2| \ g_j(\aalpha) =0 \}$ is the zero set of some holomorphic function $g_j(\aalpha)$ such that $(\partial_{\alpha_1}g_j,\partial_{\alpha_2}g_j) \neq 0$ on $\sigma_j$. Such $g_j$ is henceforth called a \emph{defining function} of $\sigma_j$. The preceding assumptions imply that our irreducible singularities are \emph{regular analytic sets}. For a detailed introduction to the latter, we refer to \citep{Shabat1991} and \citep{Chirka1989}.

The function $F(\aalpha; \varkappa)$ is chosen such that, when $\varkappa >0$, we have $\sigma \cap \mathbb{R}^2 = \emptyset$. However, when $\varkappa \to 0$, $\sigma$ hits the real plane at its real trace $\sigma'$, which is defined by 

$$\sigma' = \sigma \cap \mathbb{R}^2.$$ 
If it is possible to deform the surface $\mathbb{R}^2$ continuously to another surface $\boldsymbol{\Gamma}$ such that $\sigma \cap \boldsymbol{\Gamma} = \emptyset$ for all sufficiently small $\varkappa$, then, by Stokes' theorem, we can define $f(\x; 0)$ as 
\begin{align}
f(\x; 0 ) = \frac{1}{4 \pi^2}\iint_{\boldsymbol{\Gamma}} F(\aalpha; 0) e^{- i \x \cdot \aalpha} d \aalpha. \label{eq.Ch5ModelIntegral02}
\end{align}
Due to Stokes' theorem, again, this integral is completely determined by the relative position of $\boldsymbol{\Gamma}$ to $\sigma$ (as long as no singularities are hit, $\boldsymbol{\Gamma}$ can be deformed onto any other surface $\boldsymbol{\Gamma}'$). This relative position can be described by the bridge and arrow notation, which was first introduced in \citep{AssierShanin2021VertexGreensFunctions}, and further developed in \citep{AssierShaninKorolkov2022}. Below, we briefly outline its key aspects.

\subsubsection{The bridge and arrow notation}\label{sec:Ch5BridgeArrow}

We only consider functions with the following \emph{real property}. 
\begin{definition}\label{def:aabbnn}
Let $\sigma = \cup \sigma_j$ where the $\sigma_j$ are irreducible singularities with respective defining functions $g_j$. We define
\begin{align}
	\aa_j = \partial_{\alpha_1} g_j(\aalpha^{\star}), \ \bb_j =\partial_{\alpha_2} g_j(\aalpha^{\star}),
\end{align}
and say that  $\sigma_j$ has the real property when for every $\aalpha^{\star} \in \mathbb{R}^2$, we have $g_j(\aalpha^{\star}) \in \mathbb{R}$, $\aa_j \in \mathbb{R}$, and $\bb_j \in \mathbb{R}$. Since $\sigma_j$ is regular, we have $(\aa_j)^2 + (\bb_j)^2 \neq 0$. In this case, the real trace $\sigma'=\sigma \cap \mathbb{R}^2$ is a smooth, one dimensional curve.
\end{definition}

Let us now discuss how the deformation $\mathbb{R}^2 \to \boldsymbol{\Gamma}$ can be achieved practically. The following discussion is informal, however, and we refer to \citep{AssierShaninKorolkov2022} for a more rigorous formulation of the concepts introduced below.

Since the singularity $\sigma_j$ depends, generally, on the parameter $\varkappa$, let us for now write $\sigma_j= \sigma_j(\varkappa)$, and similarly $g_j(\aalpha)=g_j(\aalpha;\varkappa)$. However, since $\sigma_j' = \sigma_j(0) \cap \mathbb{R}^2$,  it does not depend on $\varkappa$, and we will just write $\sigma_j'$ without ambiguity.

Let $\aalpha^{\star} \in \sigma_j'$. Let us analyse how the integration surface $\mathbb{R}^2$ needs to be changed for the integral \eqref{eq.Ch5ModelIntegral} to be well-defined in the limit $\varkappa \to 0$. Without loss of generality, we may assume that $\aa_j \neq 0$ (if $\aa_j =0$, we must have $\bb_j \neq 0$ and the following procedure can be repeated). 
Take the complex plane $\{\alpha_2 \equiv \alpha_2^{\star}\} = \{\aalpha \in \mathbb{C}^2| \ \alpha_2 = \alpha_2^{\star}\}$. By the implicit function theorem, there exists a function $\alpha_1^{\dagger}(\varkappa)$ such that the equation 
\begin{align}
g_j(\alpha_1^{\dagger}(\varkappa), \alpha_2^{\star}; \varkappa) =0
\end{align}
is solvable for all sufficiently small $\varkappa$ and therefore $\sigma_j(\varkappa) \cap \{\alpha_2 \equiv \alpha_2^{\star}\} = \{(\alpha_1^{\dagger}(\varkappa), \alpha_2^{\star})\}.$ Clearly, $\{\alpha_2 \equiv \alpha_2^{\star}\} \cap \mathbb{R}^2 = \{\aalpha \in \mathbb{R}^2| \alpha_2 = \alpha_2^{\star} \}$ and therefore, the projection of this intersection onto the complex $\alpha_1$-plane is simply the real $\alpha_1$-line. Since for $\varkappa >0$, we have $\sigma_j(\varkappa) \cap \mathbb{R}^2 = \emptyset$, the point $\alpha_1^{\dagger}(\varkappa)$ must have non-zero imaginary part. Therefore, $\alpha_1^{\dagger}(\varkappa)$ lies either above, or below, the real $\alpha_1$ line, as illustrated in Fig. \ref{fig:Ch5Bypass01} (left). 

Now, it is possible to continuously change the surface $\mathbb{R}^2$ to another surface $\boldsymbol{\Gamma}$ such that, during this change, the singularity $\sigma_j(\varkappa)$ is never hit, for any $\varkappa$, and such that $\boldsymbol{\Gamma} \cap \sigma_j' = \emptyset$. This is shown in Fig. \ref{fig:Ch5Bypass01} (centre), where the curve $\gamma$ is the projection of $\boldsymbol{\Gamma}$ onto the complex $\alpha_1$-plane. For our purpose, it is enough to consider surfaces $\Gamma$ that can be parametrised over the real plane: $\Gamma$ can be described by $\{\aalpha_{\boldsymbol{\Gamma}}(\aalpha^r) \in \mathbb{C}^2| \ \aalpha^r = (\alpha_1^r,\alpha_2^r) \in \mathbb{R}^2\}$, where 
\begin{align*}
\aalpha_{\boldsymbol{\Gamma}}(\aalpha^r) = (\alpha_1^r + i\eta_1(\aalpha^r), \alpha_2^r + i \eta_2(\aalpha^r))
\end{align*}
for some vector field $\boldsymbol{\eta}: \mathbb{R}^2 \to \mathbb{R}^2$. For $\boldsymbol{\Gamma}$ to be suitable, we need to choose $\boldsymbol{\eta}$ such that $\boldsymbol{\eta}(\aalpha^{\star})$, for $\aalpha^{\star} \in \sigma_j', \ \boldsymbol{\eta}(\aalpha^{\star})$ is not tangent to $\sigma_j'$ and not zero.  For a given $\aalpha^{\star} \in \sigma_j'$, we call $\boldsymbol{\eta}(\aalpha^{\star})$ the \emph{arrow} at the point $\aalpha^{\star}$. Since $\boldsymbol{\eta}$ is never tangent to $\sigma_j'$, nowhere on $\sigma_j'$ vanishing, and since $\sigma_j'$ is a curve in $\mathbb{R}^2$, the arrow must point either to the right, or to the left of $\sigma_j'$, as illustrated in Fig. \ref{fig:Ch5Bypass01}, right\footnote{Here, the notion of `left' and `right' is as follows: right, if for $\aa_1 \neq 0$ $\boldsymbol{\eta}$ is on the same side as the positive $\Re(\alpha_1)$ direction, and left otherwise.}. It can be shown that if $\alpha_1^{\dagger}(\varkappa)$ lies above the real $\alpha_1$-axis, then the arrow points to the left of $\sigma_j'$ and, similarly, if  $\alpha_1^{\dagger}(\varkappa)$ lies below the real $\alpha_1$-axis, then the arrow points to the right of $\sigma_j'$ (see Fig. \ref{fig:Ch5Bypass01}). We refer to \citep{AssierShaninKorolkov2022}, Section 3 for the proof of all of the above statements.

We can therefore visualise the relative position of $\boldsymbol{\Gamma}$ to $\sigma_j$ by using the bridge and arrow symbol, which we `attach' to the singularity's real trace $\sigma_j'$, as shown in Fig. \ref{fig:Ch5Bypass01} (right).

\begin{figure}[t!]
\centering
\includegraphics[width=\textwidth]{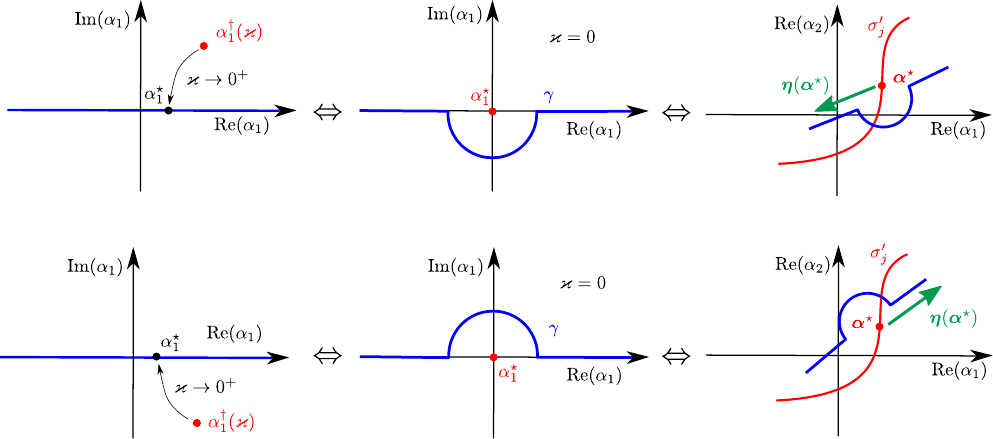}
\caption{The only two possible types of indentation of the deformed surface $\boldsymbol{\Gamma}$ about $\sigma_j$, and the resulting bridge and arrow configuration.}
\label{fig:Ch5Bypass01}
\end{figure}

\begin{notation}
Henceforth, we shall omit the argument $\varkappa$ whenever $\varkappa =0$.
\end{notation}

The bridge and arrow configuration has the crucial, and practically very useful, property that it can be continuously carried along a singularity's real-trace, as illustrated in Fig. \ref{fig:Ch5Bypass04}. That is, if the arrow points to a given side of $\sigma_j'$ at one point $\aalpha^{\star} \in \sigma_j'$, then it must point to this side at each point $\aalpha \in \sigma_j'$.
Moreover, we have the following \emph{tangential touch compatibility} (which is also illustrated in Fig.  \ref{fig:Ch5Bypass04}). If two singularities $\sigma_1$ and $\sigma_2$ intersect tangentially at $\aalpha^{\star}_j \in \sigma'_1 \cap \sigma'_2$, then the arrow points to the same side of both, $\sigma'_1$ and $\sigma'_2$.
\begin{figure}[h!]
\centering
\includegraphics[width=\textwidth]{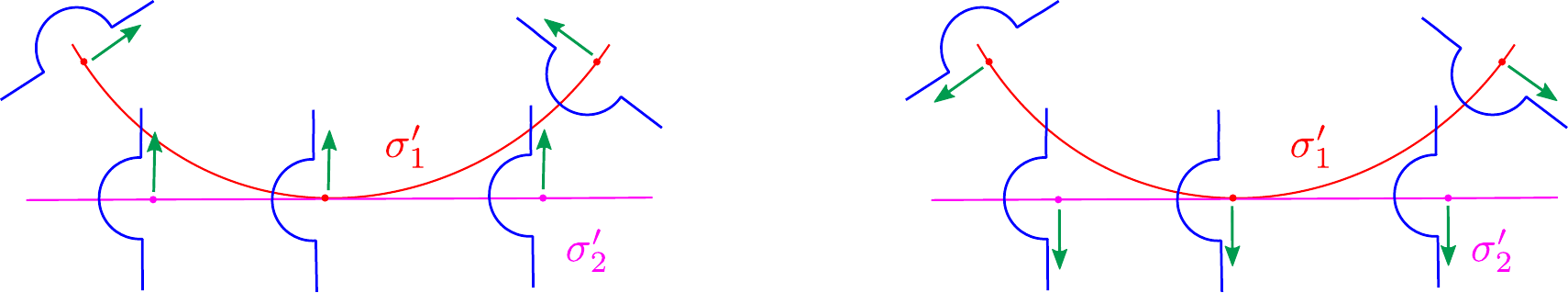}
\caption{The two possible bridge and arrow configurations for a tangential crossing.}
\label{fig:Ch5Bypass04}
\end{figure}

\subsubsection{The locality principle}\label{sec:Ch5locality}
Crucial for estimating integrals of the form \eqref{eq.Ch5ModelIntegral} is the following \emph{locality principle}. This principle relies on the concept of \emph{contributing points}, which we  explain after the theorem is stated.

\begin{theorem}[\citep{AssierShaninKorolkov2022} Theorem 6.1]\label{thm:Ch5Locality}
The integral $f(\x)$ can be estimated as $|\x| \to \infty$ as an asymptotic series. For almost all observation directions $\tilde{\x} = \x/|\x|$, the terms of the asymptotic series (up to terms decaying exponentially as $|\x|\to \infty$) can be obtained by estimating the integral in the neighbourhoods of some real, isolated, contributing points.
\end{theorem}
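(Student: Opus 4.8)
The plan is to read \eqref{eq.Ch5ModelIntegral02} as a two-dimensional oscillatory integral whose phase $\Phi(\aalpha)=\x\cdot\aalpha$ is \emph{linear}, and to exploit the fact that, for a linear phase, the large-$|\x|$ behaviour of $f$ is governed entirely by the singularity set $\sigma$ of $F$ together with the position of $\boldsymbol{\Gamma}$ relative to $\sigma$ recorded by the bridge-and-arrow data. First I would parametrise $\boldsymbol{\Gamma}$ over $\mathbb{R}^2$ as $\aalpha_{\boldsymbol{\Gamma}}(\aalpha^r)=\aalpha^r+i\boldsymbol{\eta}(\aalpha^r)$, so that $e^{-i\x\cdot\aalpha_{\boldsymbol{\Gamma}}}=e^{-i\x\cdot\aalpha^r}e^{\x\cdot\boldsymbol{\eta}(\aalpha^r)}$; the real factor $e^{\x\cdot\boldsymbol{\eta}}$ supplies decay wherever $\x\cdot\boldsymbol{\eta}<0$. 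Since $F$ is holomorphic off $\sigma$, Stokes' theorem permits deforming $\boldsymbol{\eta}$ freely as long as $\sigma$ is not crossed, and the whole strategy is to push $\boldsymbol{\eta}$ as deep as possible into the half-space $\x\cdot\boldsymbol{\eta}<0$, so that only the places where this deformation is obstructed survive.

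Second, I would localise by a smooth partition of unity subordinate to a finite cover of $\mathbb{R}^2$. On any patch bounded away from the real trace $\sigma'$ and from infinity, $F$ is analytic and bounded, the surface can be tilted into $\x\cdot\boldsymbol{\eta}\le -c<0$ without meeting $\sigma$, and the patch contributes $O(e^{-c|\x|})$; the same tilting, together with the decay of $F$ at infinity, disposes of the unbounded patch, which accounts for the ``up to terms decaying exponentially'' clause. This isolates the obstructions, all of which lie on $\sigma'$. Because the phase is linear, being \emph{pinned} at a real point $\aalpha^{\star}\in\sigma'$ means that near $\aalpha^{\star}$ the arrow cannot be rotated into the decaying half-space while remaining off $\sigma$; using Definition \ref{def:aabbnn} this occurs precisely at (i) isolated points of a single smooth trace $\sigma_j'$ where the observation direction $\tilde{\x}$ is normal to $\sigma_j'$ (the one-singularity, lateral-type contributing points), and (ii) transversal intersection points $\sigma_i'\cap\sigma_j'$ of two traces (the two-singularity, corner-type points). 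These are the \emph{contributing points}.

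Third, near each contributing point I would reduce the integral to a local model. At a type-(i) point the non-vanishing gradient of $g_j$ (regularity and the real property) lets me adopt $g_j$ and an arc-length variable along $\sigma_j'$ as local coordinates; the integral factorises into a one-dimensional singular integral in $g_j$ (a pole yielding a residue, a branch yielding a branch-cut contribution) times a transverse stationary-phase integral whose stationary point is exactly the normality condition. At a type-(ii) point I would use $(g_i,g_j)$ as coordinates and evaluate a double residue/branch contribution. The arrow prescribes on which side the deformation is caught, hence the sign and coefficient of each term, and each model yields a term of the asymptotic series at the appropriate algebraic order in $|\x|$; summing over the finitely many contributing points gives the full series modulo the exponential errors of the previous step. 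The phrase ``almost all observation directions'' is then handled by excluding the measure-zero set of $\tilde{\x}$ for which contributing points fail to be isolated or non-degenerate --- directions where a normality point meets a crossing point, where two crossings merge, or where $\tilde{\x}$ is normal to a \emph{tangential} touch of two traces (the compatibility of Fig. \ref{fig:Ch5Bypass04}) --- since there the local models coalesce and uniform rather than purely local asymptotics are required.

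I expect the main obstacle to be the rigorous version of the third step: showing that the steepest-descent deformation is pinned exactly at the contributing points and that each local contribution is captured by its model. The standard non-stationary-phase integration-by-parts argument breaks down near $\sigma$, where $F$ blows up, so one must instead slide $\boldsymbol{\Gamma}$ to the arrow-prescribed side of $\sigma$ and carefully account for the residual singular contributions picked up whenever the surface is dragged across a singularity --- multidimensional bookkeeping that must, moreover, remain valid in the limit $\varkappa\to 0$ in which $\sigma$ descends onto $\mathbb{R}^2$.
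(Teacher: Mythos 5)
This paper does not actually prove Theorem \ref{thm:Ch5Locality}: it is imported verbatim from \citep{AssierShaninKorolkov2022}, and Section \ref{sec:Ch5OSummary02} only restates its ingredients (the surface deformation recorded by the bridge-and-arrow data, the notion of contributing point, and the local estimation formulae \eqref{eq.Ch5EstimateCrossing} and \eqref{eq.Ch5EstimateSOS}). Your sketch follows essentially the same route as that cited proof --- exponential decay away from the real trace by pushing $\boldsymbol{\Gamma}$ into the half-space $\x\cdot\boldsymbol{\eta}<0$, pinning of the deformation on $\sigma'$, classification of the pinned points into saddles on singularities and transversal crossings, and local models giving the algebraic-order terms --- so there is nothing in the present paper to contrast it with. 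One imprecision worth fixing: in your classification (i), normality of $\tilde{\x}$ to $\sigma_j'$ is necessary but not sufficient for a contribution; the paper's definition of an SOS additionally requires $\tilde{\x}\cdot\boldsymbol{\eta}(\aalpha^{\star})>0$, i.e.\ the arrow must point to the non-decaying side, and a normal point with the opposite arrow orientation yields only exponentially small terms --- your own `pinning' criterion captures this correctly, but the explicit classification sentence drops the sign condition.
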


\paragraph{Contributing points} 

The only potentially contributing points, in the sense of Theorem	\ref{thm:Ch5Locality}, are the so-called `saddles on singularities' and transversal crossings of singularities. We explain the concept of a saddle on a singularity (SOS) below, but first, let us examine what this implies regarding the estimation of $f$. If the point $\aalpha^{\star} \in \sigma_j'$ is not a crossing of singularities and not an SOS, there exists an open neighbourhood $U^{\star} \subset \mathbb{C}^2$ of $\aalpha^{\star}$ (which intersects $\boldsymbol{\Gamma}$) such that the integral
\begin{align}
f_{\mathrm{loc}}(\x) = \iint_{\boldsymbol{\Gamma} \cap U^{\star}} F(\aalpha) e^{- i \x \cdot \aalpha} d\aalpha \label{eq.Ch5Estimate01} 
\end{align}
decays exponentially as $|\x| \to \infty$\footnote{To show this, the integration surface $\boldsymbol{\Gamma}$ may need to be deformed once more to another surface, $\boldsymbol{\Gamma}'$, say, on which $\tilde{\x} \cdot \boldsymbol{\eta}' < 0$. Again, we refer to \citep{AssierShaninKorolkov2022} for the technical details involved.}.
Similarly, $f_{\mathrm{loc}}(\x)$ decays exponentially if $\aalpha^{\star}$ is a point of tangential crossings of singularities, or if $\aalpha^{\star}$ is not singular. Although $f_{\mathrm{loc}}(\x)$ depends on the point $\aalpha^{\star}$, it will always be clear which point $\aalpha^{\star}$ is referred to, such that this notation is non-ambiguous.

\begin{definition}[Contributing point]
If $f_{\mathrm{loc}}(\x)$ does not decay exponentially, we call $\aalpha^{\star}$ a contributing point. 
\end{definition}

\begin{definition}[SOS]
Fix an observation direction $\tilde{\x}$ defined by $\x = r \tilde{\x}$ (i.e.\! $\tilde{\x} = (\cos(\vartheta), \sin(\vartheta))$). Let $\tilde{\x}$ be orthogonal to $\sigma_j'$ at the point $\aalpha^{\star} \in \sigma_j'$. If the bridge and arrow configuration  is such that $\tilde{\x} \cdot \boldsymbol{\eta}(\aalpha^{\star}) >0$, then $\aalpha^{\star}$ is called an SOS on $\sigma_j$ with respect to $\tilde{\x}$. 
\end{definition}

\paragraph{Estimation of integrals}\label{sec:Ch5IntegralEstimation} 
We now outline how the locality principle can be used in practice.
For an irreducible singularity $\sigma_j$, let 
\begin{align*}
\ \nn_j = \frac{1}{\sqrt{(\aa_j)^2 + (\bb_j)^2}}
\begin{pmatrix}
	\aa_j \\
	\bb_j
\end{pmatrix}.
\end{align*}

\noindent \textbf{Transversal crossing of singularities.} Let $\aalpha^{\star} \in \sigma_1' \cap \sigma_2'$ and let us assume that the singularities $\sigma_1$ and $\sigma_2$ cross transversally at $\aalpha^{\star}$.
Set $\Delta^{\star} = \aa_1 \bb_2 - \aa_2 \bb_1$, and let us assume that $\Delta^{\star} >0$\footnote{Since the crossing is transversal we always have $\Delta^{\star} \neq 0$, and by eventually changing the sign of the defining function $g_1$, say, the condition $\Delta^{\star} >0$ is non-restrictive.}. Introduce the sign factors $s_1$ and $s_2$ as follows. For $j=1,2$, we set $s_j=+1$ (respectively $s_j=-1$), if  $\text{sign}(\nn_j \cdot \boldsymbol{\eta}(\aalpha^{\star})) > 0$ (resp. $\text{sign}(\nn_j \cdot \boldsymbol{\eta}(\aalpha^{\star})) < 0$). Let $\tilde{\x}$ be such that $\aalpha^{\star}$ is not an SOS with respect to $\tilde{\x}$.  If there exist constants $A \in \mathbb{C}$ and $m_{1,2} \in \mathbb{R}$ such that
\begin{align}
F(\aalpha) \sim A \times g_1^{-m_1}(\aalpha) \times g_2^{-m_2}(\aalpha), \text{ as } \aalpha \to \aalpha^{\star}
\end{align}
then, as $r \to \infty$, we have
\begin{align}
f_{\mathrm{loc}}(\x) \sim \frac{A e^{-i \x \cdot \aalpha^{\star}} e^{-i \frac{\pi}{2}\left(s_1 m_1+s_2 m_2\right)}}{\Gamma\left(m_1\right) \Gamma\left(m_2\right)\left(\Delta^{\star}\right)^{m_1+m_2-1}}\frac{\mathcal{H}(s_1(x_1 \bb_2 - x_2 \aa_2))}{|x_1 \bb_2-x_2 \aa_2 |^{1-m_1}}\frac{\mathcal{H}(s_2(-x_1\bb_1 + x_2 \aa_1))}{|-x_1 \bb_1+x_2 \aa_1 |^{1-m_2}},  \label{eq.Ch5EstimateCrossing}
\end{align}
where $\mathcal{H}$ is the Heaviside step-function, and $\Gamma$ is the gamma function (see \citep{AssierShaninKorolkov2022}, Section 5.2). If $m_1$ or $m_2$ is a negative integer, no crossing of singularities occurs, and $f_{\text{loc}}$ is exponentially decaying. \\ 

\noindent \textbf{Isolated SOS.}  We will only consider\emph{ isolated} SOS: in some small open neighbourhood $U^{\star}$ of $\aalpha^{\star}$, the point  $\aalpha^{\star}$ is the only SOS on $\sigma'_j$ with respect to $\tilde{\x}$, and $\aalpha^{\star}$ does not belong to any other singularity. Similar to the case of a transversal crossing discussed above, we introduce the sign factor $s$ as follows. $s=+1$, if $\text{sign}(\nn_j \cdot \boldsymbol{\eta}(\aalpha^{\star})) > 0$, and $s=-1$, if $\text{sign}(\nn_j \cdot \boldsymbol{\eta}(\aalpha^{\star})) < 0$. Let now $\aalpha^{\star}$ be an isolated SOS with respect to $\tilde{\x}$. If there exist constants $A \in \mathbb{C}$ and $m_{} \in \mathbb{R}$ such that
\begin{align}
F(\aalpha) \sim A \times g_j^{- m_{}}(\aalpha), \text{ as } \aalpha \to \aalpha^{\star}
\end{align}
then, as $r \to \infty$, we have
\begin{align}
f_{\mathrm{loc}}(\x) \sim \frac{ A e^{-i \x \cdot \aalpha^{\star}} \sqrt{\pi} e^{-i s m_{} \pi / 2}}{2 \pi \left(\left(\mathrm{a}_j^{\star}\right)^2+\left(\mathrm{b}_j^{\star}\right)^2\right) \Gamma(m_{})}\left(\frac{r}{\sqrt{\left(\mathrm{a}_j^{\star}\right)^2+\left(\mathrm{b}_j^{\star}\right)^2}}\right)^{m_{}-3 / 2}  \times	
\begin{cases}
	\begin{aligned}
		\frac{e^{-i \pi / 4}}{\sqrt{s \xi}} &\text { if } s \xi>0, \\
		\frac{e^{i \pi / 4}}{\sqrt{-s \xi}} &\text { if } s \xi<0,
	\end{aligned}
\end{cases} \label{eq.Ch5EstimateSOS}
\end{align}
see \citep{AssierShaninKorolkov2022}, Section 5.1 (as in the case of a transversal crossing of singularities, if $m$ is a negative integer, $f_{\text{loc}}$ is exponentially decaying).
Here, the constant $\xi$ is defined as follows. Set 
\begin{align}
\Lambda = \aa(\alpha_1 - \alpha_1^{\star}) + \bb (\alpha_2 - \alpha_2^{\star}), \text{ and } 
\zeta =  \bb (\alpha_1 - \alpha_1^{\star}) - \aa (\alpha_2 - \alpha_2^{\star}).
\end{align}
Then, it can be shown that there exists a unique constant $\xi \in \mathbb{R}$ such that
\begin{align}
g(\aalpha) = \Lambda - \xi \zeta^2 + \mathcal{O}\left(\Lambda^2 + \zeta \Lambda\right) \text{ as } \aalpha \to \aalpha^{\star}.
\end{align}
The proof can be found Appendix A of \citep{AssierShaninKorolkov2022}, and shows that $\xi$ is proportional to the curvature of $\sigma_j'$ at $\aalpha^{\star}$.

Thus, overall, to obtain the sought asymptotic expansion of $f$, it is enough to estimate $F$ near the potentially contributing points.

\begin{definition}[Wave component]
If $f_{\mathrm{loc}}(\x)$ is not exponentially decaying, we call the wave-field associated with the leading order far-field asymptotics of $f_{\mathrm{loc}}(\x)$, as described by \eqref{eq.Ch5EstimateCrossing} and \eqref{eq.Ch5EstimateSOS}, a  wave component of $f$. 
\end{definition}

Therefore, to prove the correctness of \eqref{eq.Ch1PWFarfield01}, we need to show that the  wave components of $\phi$ are given by $\phi_{\GO}$, $\phi_{\mathrm{C}}$, $\phi_{\mathrm{L}_1}$, and $\phi_{\mathrm{L}_2}$ and, similarly, that the  wave components of $\psi$ are given by $\psi_{\GO}$, $\psi_{\mathrm{C}}$, $\psi_{\mathrm{L}_1}$, and $\psi_{\mathrm{L}_2}$.

Before we proceed with the corresponding calculation, let us  outline the general strategy that we will follow throughout the rest of this article. Let $\aalpha^{\star}$ be either a transversal crossing of singularities or an isolated SOS.
\begin{enumerate}
\item[\textbf{Step 0.}] \  Determine the bridge and arrow configuration at $\aalpha^{\star}$.
\item[\textbf{Step 1.}] \ Choose defining functions $g_1$ and $g_2$ such that $\Delta^{\star} >0$ (transverse crossing), or some defining function $g$ (SOS). This allows for determining the sign factors and the constant $\xi$.
\item[\textbf{Step 2.}] \ Study the asymptotic behaviour of $F= \Psi_{++}$ or $F= \Phi_{3/4}$ near $\aalpha^{\star}$, and determine whether $f_{\mathrm{loc}}(\x)$ is exponentially decaying.
\item[\textbf{Step 3.}] \ If $f_{\mathrm{loc}}(\x)$ does not decay exponentially, use the formulae presented in Section \ref{sec:Ch5IntegralEstimation} to obtain the corresponding  wave component.
\end{enumerate}
Moreover, the following notation will be used throughout the remainder of this article.
\begin{notation}[Contributing asymptotic behaviour]
Let $F$ be any function for which the asymptotic behaviour as $\aalpha \to \aalpha^{\star}$ is to be found. Let us assume that, as $\aalpha \to \aalpha^{\star}$, we have 
\begin{align*}
	F(\aalpha) \sim F_1(\aalpha) + F_2(\aalpha),
\end{align*}
and that, depending on whether $\aalpha^{\star}$ is an SOS or a point of transversal crossings of singularities, $F_1(\aalpha)$ has no isolated SOS on $\aalpha^{\star}$ (relative to the observation direction $\tilde{\x}$ of interest), or $F_1(\aalpha)$ does not exhibit a transversal crossing of singularities at $\aalpha^{\star}$, respectively. We then write
\begin{align*}
	F(\aalpha) \contr F_2(\aalpha), \ \text{ as } \aalpha \to \aalpha^{\star}.
\end{align*}
This notation is motivated by the fact, that we are interested in the asymptotic behaviour of $$f_{\mathrm{loc}}(\x) = \frac{1}{4 \pi^2} \iint_{\boldsymbol{\Gamma} \cap U^{\star}} F(\aalpha) e^{- i \x \cdot \aalpha} d \aalpha,$$
and the integral over $F_1$ must decay exponentially. Since we are only interested in the  wave components associated with $f_{\mathrm{loc}}$, the integral over $F_1$ may thus be discarded. In other words, the \emph{contributing} asymptotic behaviour of $F$ is completely determined by the asymptotic behaviour of $F_2$.
\end{notation}

\section{The simple case}\label{sec:Simple}

Recall that the `simple case' corresponds to $\vartheta_{0} \in (\pi, 3 \pi/2)$ where $\vartheta_{0}$ is the incident angle.  In \citep{KunzAssierAC}, it was shown that we then have the following irreducible singularities of $\Psi_{++}$ and $\Phi_{3/4}$. 
\begin{align*}
\sigma_{p_1} = \{\aalpha \in \mathbb{C}^2| \ \alpha_1 = \a_1 \}, \ 
\sigma_{p_2} = \{\aalpha \in \mathbb{C}^2| \ \alpha_1 = \a_2 \}, \ 
\sigma_{b_1} = \{ \aalpha \in \mathbb{C}^2| \ \alpha_1 = -k_1 \} , \\
\sigma_{b_2} = \{ \aalpha \in \mathbb{C}^2| \ \alpha_1 = -k_2 \}, \ 
\sigma_{b_3} = \{ \aalpha \in \mathbb{C}^2| \ \alpha_2 = -k_1 \}, \ 
\sigma_{b_4} = \{ \aalpha \in \mathbb{C}^2| \ \alpha_2 = -k_2 \}.   
\end{align*}
The subscript $_p$ indicates a polar singularity, whereas the subscript $_b$ denotes a branch. Here, the assumption that $\vartheta_0 \neq \pi, 3 \pi/2$ is important, since it ensures that the polar and branch sets are not coalescing.	Moreover, the set $\sigma_{c_1}$, introduced below, is an irreducible polar singularity of $\Phi_{3/4}$ whereas the set $\sigma_{c_2}$ is an irreducible polar singularity of $\Psi_{++}$.
\begin{align*}
\sigma_{c_1} = \{\aalpha \in \mathbb{C}^2| \ \alpha_1^2 + \alpha_2^2 = k_1^2\}, \quad
\sigma_{c_2} = \{\aalpha \in \mathbb{C}^2| \ \alpha_1^2 + \alpha_2^2 = k_2^2\}.
\end{align*}
Observe that all of these irreducible singularities are regular and have the real property. Their real traces are shown in Fig. \ref{fig:Ch5Singularities01}. Thus, the framework outlined in Section  \ref{sec:Ch5OSummary02} is indeed applicable. In \citep{KunzAssierAC}, it was shown that only parts of $\sigma_{c_1}$ and $\sigma_{c_2}$ are actually singular. This is why we only show parts of the circles in Fig. \ref{fig:Ch5Singularities01}.
Now, using the procedure described in Section 3 of \citep{AssierShaninKorolkov2022} (which is summarised in Section \ref{sec:Ch5BridgeArrow}), it is reasonably straightforward to find the bridge and arrow configuration on the sets $\sigma'_{p_j}, j=1,2$ and $\sigma'_{b_j}, j=1,2,3,4$. Then, by tangential-touch compatibility, the bridge and arrow configuration can be found on $\sigma'_{c_1}$ and $\sigma'_{c_2}$. The resulting  configurations are shown in Fig. \ref{fig:Ch5Singularities01}.
\begin{figure}[h]
\includegraphics[width=\textwidth]{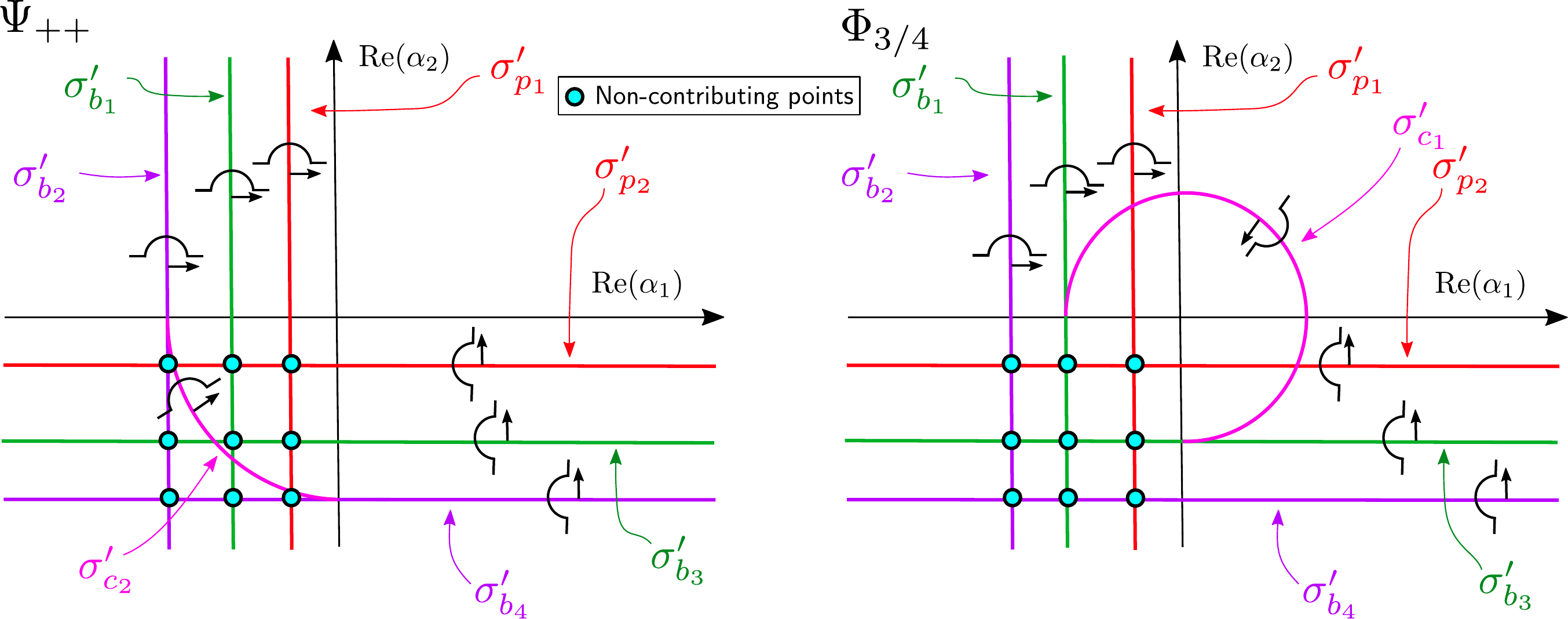}
\caption{Real traces of the spectral functions' irreducible singularities, corresponding bridge and arrow configuration, and non-contributing points.}
\label{fig:Ch5Singularities01}\label{fig:Ch5Singularities02}
\label{fig:Ch5SimpleTransmitted}\label{fig:PsiCW}	\label{fig:Ch5LateralTransmitted}\label{fig:SaddleSC} \label{fig:Ch5LateralReflected}
\end{figure}

Henceforth, let $\x = r \tilde{\x}$ with $\tilde{\x} = (\cos(\vartheta), \sin(\vartheta))$, and let us assume that $\vartheta \neq 0, \pi/2$. Recall that the only points that can potentially yield  wave components are transversal crossings of singularities and SOS. Since $\vartheta \neq 0, \pi/2$, only points on $\sigma_{c_1}'$ and $\sigma_{c_2}'$, respectively, can be such an SOS. We proceed by studying these potentially contributing points. Recall that, we only consider $k_1 \neq k_2$ for, otherwise, the penetrable wedge diffraction problem described in Section \ref{sec:ProblemFormulation} is trivial.

\subsection{Non-contributing crossings}\label{sec:Ch5Additive01}

In this section, we look at all crossings that do not involve the circles $\sigma'_{c_1}$ and $\sigma'_{c_2}$. That is, we analyse the crossings $\sigma'_{p_1} \cap \sigma'_{p_2}$, $\sigma'_{p_j} \cap \sigma'_{b_l}, \ j=1,2; \ l=1,2,3,4$ and $\sigma'_{b_j} \cap \sigma'_{b_l}, \ j,l = 1,2,3,4$. In Section 4 of \citep{KunzAssierAC}, it was shown that the crossings $\sigma'_{b_j} \cap \sigma'_{b_l}, \ j,l = 1,2,3,4$ are `additive' in the sense, that $\Phi_{3/4}$ satisfies the additive crossing property relative to these crossing points (we refer to \citep{AssierShanin2019}, \citep{AssierShaninKorolkov2022}, or \citep{KunzAssierAC} for an explanation of this property). Using the Wiener-Hopf equation \eqref{eq.Ch5WHeq}, it can be shown that the spectral function $\Psi_{++}$ also satisfies the additive crossing property relative to these crossings. By Theorem 4.13 of \citep{AssierShaninKorolkov2022}, such crossings therefore do not yield a  wave component. Now, it can be shown that, in fact, all of the crossings highlighted in Fig. \ref{fig:Ch5Singularities02} are non-contributing, which implies that they do not yield a  wave component. The procedure for proving this is the same for each crossing, however, and we therefore only give the proof for the point $\aalpha^{\star} = (-k_1, \a_2)$, corresponding to the crossing of $\sigma'_{b_1}$ and $\sigma'_{p_2}$.

\begin{lemma}\label{lem:01}
The point $\aalpha^{\star} = (-k_1, \a_2)$ is non-contributing.		
\end{lemma}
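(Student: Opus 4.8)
The plan is to run the four-step strategy of Section~\ref{sec:Ch5OSummary02} for the transversal crossing $\aalpha^{\star} = (-k_1, \a_2) \in \sigma'_{b_1} \cap \sigma'_{p_2}$, with the understanding that, for a non-contributing point, the whole argument collapses onto \textbf{Step~2}: showing that $f_{\mathrm{loc}}(\x)$ decays exponentially. I would take the defining functions $g_1(\aalpha) = \alpha_1 + k_1$ for the branch $\sigma_{b_1}$ and $g_2(\aalpha) = \a_2 - \alpha_2$ for the pole $\sigma_{p_2}$; the bridge-and-arrow data of \textbf{Step~0} and the sign factors of \textbf{Step~1} are read off Fig.~\ref{fig:Ch5Singularities01} exactly as for the other crossings but play no role in the conclusion. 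The point I expect to establish is that, near $\aalpha^{\star}$, the spectral function does \emph{not} behave like the genuine product $g_1^{-m_1}g_2^{-m_2}$ demanded by the crossing estimate \eqref{eq.Ch5EstimateCrossing}; instead it admits an \emph{additive} decomposition into a term carrying only the $\sigma_{p_2}$-pole and a term carrying only the $\sigma_{b_1}$-branch. This additive structure is what makes the point non-contributing: the relevant exponents would be $m_2 = 1$ and, for the branch, $m_1 = \pm 1/2$ (from $\mmysqrt{k_1^2 - \alpha_1^2} \sim \sqrt{2k_1}\sqrt{\alpha_1+k_1}$), neither a negative integer, so a genuine product \emph{would} contribute.

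The heart of the proof is this decomposition. Starting from the continuation formula \eqref{eq.Ch5PsiCont} (continued into a neighbourhood of $\aalpha^{\star}$ as in \citep{KunzAssierAC}), I would split $\Psi_{++}$ into its integral term and the explicit forcing term proportional to $P_{++}$. Writing the latter as $C(\aalpha)/(\a_2 - \alpha_2)$ and isolating the $\alpha_2 = \a_2$ pole, $\Psi_{++}(\aalpha) = \tfrac{C(\alpha_1,\a_2)}{\a_2 - \alpha_2} + R(\aalpha)$ with $R$ analytic in $\alpha_2$ at $\a_2$, the crucial observation is that the residue $C(\alpha_1,\a_2)$ is \emph{analytic in $\alpha_1$ at $-k_1$}. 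Indeed, by \eqref{eq.Kalpha2Fact} the two branch-carrying factors combine as $K_{\circ+}(\alpha_1,\a_2)\,K_{\circ-}(\alpha_1,\a_2) = K(\alpha_1,\a_2)$, which by \eqref{eq.Ch5KPDef} is rational in $\alpha_1$, while $K_{-\circ}(\alpha_1,\a_2)$ from \eqref{eq.Kalpha1Fact} involves only $\mmysqrt{k_j^2 - \a_2^2}$ and is likewise rational in $\alpha_1$; hence every occurrence of the branch $\mmysqrt{k_1^2-\alpha_1^2}$ cancels in $C(\alpha_1,\a_2)$. Thus the pole part is branch-free in $\alpha_1$, and the remaining term $R(\aalpha)$ — which does carry the $\sigma_{b_1}$-branch, through the integral term and through the prefactor $1/K_{\circ+}(\aalpha)$ — is analytic in $\alpha_2$ at $\a_2$.

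Consequently $\Psi_{++} \contr 0$ at $\aalpha^{\star}$: it is the sum of one piece with no transversal crossing (a pure $\alpha_2$-pole with $\alpha_1$-analytic coefficient) and one piece with no transversal crossing (a pure $\alpha_1$-branch, analytic in $\alpha_2$), so by the contributing-asymptotic-behaviour convention and Theorem~4.13 of \citep{AssierShaninKorolkov2022} the local integral decays exponentially. The same holds for $\Phi_{3/4} = -K\Psi_{++} - P_{++}$: since $K$ is rational and analytic at $\aalpha^{\star}$ (its denominator equals $-\a_2^2 \neq 0$ there) it preserves the additive structure, and the residue of $P_{++}$ along $\alpha_2 = \a_2$ equals $1/(\a_1-\alpha_1)$, again analytic in $\alpha_1$ at $-k_1$. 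I expect the main obstacle to be the bookkeeping needed to justify the additive split rigorously — in particular checking that the integral term $\int_P \mathcal{I}_2(\aalpha; z_2)\,dz_2$ together with $1/K_{\circ+}(\aalpha)$ contributes no pole in $\alpha_2$ at $\a_2$ (i.e.\ that the contour $P$ does not pinch against $\a_2$) — whereas the branch-cancellation $K_{\circ+}K_{\circ-}=K$ that drives the whole argument is immediate.
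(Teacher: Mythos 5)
Your proof is correct and follows essentially the same route as the paper: both start from the continuation formula \eqref{eq.Ch5PsiCont}, note that only its external additive term exhibits the crossing at $(-k_1,\a_2)$, establish the additive crossing property there, and conclude via Theorem 4.13 of \citep{AssierShaninKorolkov2022}. The only difference is organisational: the paper decomposes $\Phi = K\Psi_{++}$ by explicitly rationalising the ratio $K_{\circ-}(\aalpha)/K_{\circ-}(\alpha_1,\a_2)$ into a rational part plus a branch part vanishing at $\alpha_2=\a_2$, whereas you extract the residue of $\Psi_{++}$ at the $\alpha_2$-pole and observe it is branch-free because $K_{\circ+}(\alpha_1,\a_2)K_{\circ-}(\alpha_1,\a_2)=K(\alpha_1,\a_2)$ is rational --- the same cancellation in different clothing.
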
	

\begin{proof} We follow the strategy outlined at the end of Section \ref{sec:Ch5IntegralEstimation}, and show that $\aalpha^{\star}$ is non-contributing for $\Phi = -\Phi_{3/4} - P_{++}$. This implies the sought non-contributing behaviour of $\Psi_{++}$ (by the Wiener-Hopf equation) and  of $\Phi_{3/4}$ (since $P_{++}$ does not exhibit a crossing of singularities at $\aalpha^{\star}$). Let $\sigma_1 = \sigma_{b_1}$ and $\sigma_2 = \sigma_{p_2}$. 

\textbf{Step 1.} Choose $g_1(\aalpha)= \alpha_1 + k_1$ and $g_2(\aalpha) = \alpha_2 - \a_2$ as defining functions for the singularities $\sigma_1$ and $\sigma_2$, respectively. 

\textbf{Step 2.} For $\varkappa >0$, we have $(-k_1, \a_2)  \in \LHP \times \LHP$, so we can use equation \eqref{eq.Ch5PsiCont} or \eqref{eq.Ch5PsiCont2} for computing the asymptotic behaviour of $\Phi$ as $\aalpha \to \aalpha^{\star}$. 
Let us use  formula \eqref{eq.Ch5PsiCont}, wherein only the external additive term exhibits a crossing of singularities. Therefore, as $\aalpha \to \aalpha^{\star}$, we find 
\begin{align*} 
	-\Phi(\aalpha) & \contr  
	\frac{K_{\circ -}(\aalpha) K_{- \circ}(-k_1, \a_2)}{K_{\circ - }(\alpha_1, \a_2) K_{- \circ}(\a_1, \a_2) (-k_1 - \a_1)(\alpha_2 - \a_2)}\cdot \label{eq.NonContr1} \numberthis
\end{align*}
Now, using the definition of $K_{\circ -}$ (cf.\! \eqref{eq.Kalpha1Fact}), we have, as $\aalpha \to \aalpha^{\star}$
\begin{align}
	\frac{K_{\circ -}(\aalpha)}{K_{\circ -}(\alpha_1,\a_2)} = \frac{\mmysqrt{k_2^2 - \alpha_1^2} - \alpha_2}{\mmysqrt{k_1^2 - \alpha_1^2} - \alpha_2} \frac{\mmysqrt{k_1^2 - \alpha_1^2} - \a_2}{\mmysqrt{k_2^2 - \alpha_1^2} - \a_2} \sim 	\frac{\mmysqrt{k_1^2 - \alpha_1^2} - \a_2}{\mmysqrt{k_1^2 - \alpha_1^2} - \alpha_2},
\end{align}
and, moreover, we have
\begin{align*}
	\frac{\mmysqrt{k_1^2 - \alpha_1^2} - \a_2}{\mmysqrt{k_1^2 - \alpha_1^2} - \alpha_2} 
	&= \frac{k_1^2 - \alpha_1^2  - \a_2 \alpha_2}{k_1^2 - \alpha_1^2 - \alpha_2^2} + \frac{\mmysqrt{k_1^2 - \alpha_1^2}(\alpha_2 - \a_2)}{k_1^2 - \alpha_1^2 - \alpha_2^2}\cdot \numberthis
\end{align*}
Therefore, as $\aalpha \to \aalpha^{\star}$, we find 
\begin{align*}
	-\Phi(\aalpha)  \contr & \underbrace{\frac{ K_{- \circ}(-k_1, \a_2)}{ K_{- \circ}(\a_1, \a_2) (-k_1 - \a_1)(\alpha_2 - \a_2)}}_{U_1(\aalpha)}  + \underbrace{\frac{ K_{- \circ}(-k_1, \a_2) \mmysqrt{k_1^2 - \alpha_1^2} }{ K_{- \circ}(\a_1, \a_2) (-k_1 - \a_1)(- \a_2^2)}}_{U_2(\aalpha)}\cdot  \numberthis \label{eq.Ch5Additive01}
\end{align*}
Now, the first term, $U_1(\aalpha)$, is regular at $\alpha_1 = -k_1$ whereas the second term, $U_2(\aalpha)$, is regular at $\alpha_2 = \a_2$. Therefore, neither $U_1$ nor $U_2$ exhibits a crossing of singularities at $\aalpha = \aalpha^{\star}$ and thus, the crossing is additive. Therefore, by Theorem 4.13 of \citep{AssierShaninKorolkov2022}, it does not lead to a  wave component.
\end{proof}

\begin{remark}
Analysing the crossing $(\a_1,\a_2)$ of $\sigma_{p_1}$ and $\sigma_{p_2}$ is slightly different, but easier: proceeding as in the proof of Lemma \ref{lem:01}, we find that $\Phi \contr P_{++}$. This means that the crossing $(\a_1,\a_2)$ is not additive for $\Phi$. However we get $\Phi_{3/4} \contr 0$, so the crossing does not yield a wave component.
\end{remark}

\subsection{Wave components of $\psi$}

Let us now study the remaining transversal crossings as well as the points which are isolated SOS of $\Psi_{++}$. Recall that $\tilde{\x} = (\cos(\vartheta), \sin(\vartheta))$. When $k_2 < k_1$, the remaining potentially contributing points of $\Psi_{++}$ are given by 
\begin{align*}
\aalpha_{\mathrm{T}_1} = \left(- \sqrt{k_2^2 - \a_2^2}, \a_2 \right), \ \aalpha_{\mathrm{T}_2} = \left(\a_1, - \sqrt{k_2^2 - \a_1^2}\right), \ \aalpha_{\mathrm{C}_2}(\vartheta) = -k_2 \tilde{\x}
\end{align*}
and when $k_2 > k_1$, we obtain the additional contributing points
\begin{align*}
\aalpha_{\mathrm{L}_1} &= \left(\sqrt{k_2^2 - k_1^2}, -k_1\right), \ \aalpha_{\mathrm{L}_2} = \left(-k_1, -\sqrt{k_2^2 - k_1^2} \right),
\end{align*}
as illustrated in Fig. \ref{fig:Ch5Singularities04}.
The bridge and arrow configuration at these contributing points is as displayed in Fig. \ref{fig:Ch5Singularities01}.
\begin{figure}[h!]
\centering
\includegraphics[width=\textwidth]{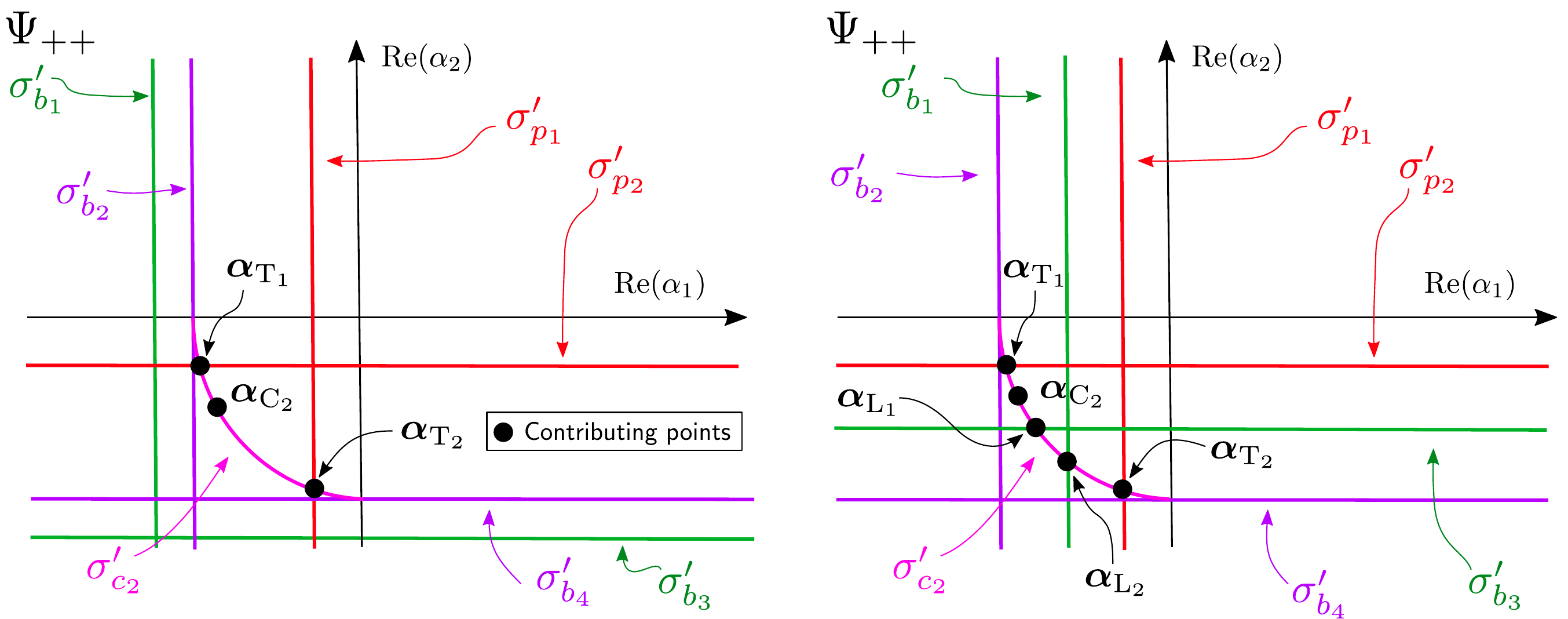}
\caption{Contributing points of $\Psi_{++}$ for $k_2 < k_1$ (left) and $k_2 > k_1$ (right).}
\label{fig:Ch5Singularities04}
\end{figure}

\subsubsection{The transmitted waves}\label{sec:FirstTrans}

Since the  wave components corresponding to the points $\aalpha_{\mathrm{T}_1}$ and $\aalpha_{\mathrm{T}_2}$ are computed similarly, we will only do it in detail for the wave corresponding to the point $\aalpha_{\mathrm{T}_1}$. We refer to \citep{KunzThesis} for a more detailed discussion. We henceforth assume that $k_2^2 - \a_2^2 \geq 0$, for otherwise $\aalpha_{\mathrm{T}_1} \notin \mathbb{R}^2$. In this case, we obtain no corresponding  wave component, which corresponds to total (internal) reflection of $\phi_{\iin}$ on the wedge's face $\{x_1=0, \ x_2 > 0\}$. Similarly, we assume $k_2^2 - \a_1^2 \geq 0$, for $k_2^2 - \a_1^2 < 0$ corresponds to total (internal) reflection of $\phi_{\iin}$ on the wedge's face $\{x_1>0, \ x_2 = 0\}$.\\

\noindent \textbf{The transmitted wave $\psi_{\mathrm{T}_1}$.}  Let us consider the contribution of the point $\aalpha^{\star} = \aalpha_{\mathrm{T}_1}$, which is a transverse crossing of $\sigma'_{p_2}$ and $\sigma'_{c_2}$. The corresponding bridge and arrow configuration is displayed in Fig. \ref{fig:Ch5SimpleTransmitted}. Let $\sigma_1 = \sigma_{c_2}$ and  $\sigma_{2} = \sigma_{p_2}$. 

\textbf{Step 1.} Choose $g_1(\aalpha) = k_2^2 - \alpha_1^2 - \alpha_2^2$ and $g_2(\aalpha) = \alpha_2 - \a_2$. Due to the bridge and arrow configuration at $\sigma'_1 \cap \sigma'_2$ and since $\nn_1 = - \boldsymbol{e}_r,$ and $\nn_2 = \boldsymbol{e}_{\Re(\alpha_2)}$, the sign factors are given by $s_1 = s_2 = +1$.

\textbf{Step 2.} For $\varkappa>0$, we have $\aalpha_{\mathrm{T}_1} \in \LHP \times \LHP$, and we use \eqref{eq.Ch5PsiCont2} for computing the asymptotic behaviour of $\Psi_{++}$ as $\aalpha \to \aalpha_{\mathrm{T}_1}$. Now, only the external additive term in equation \eqref{eq.Ch5PsiCont} contributes to the far-field since the integral term does not exhibit a crossing of singularities at $\aalpha_{\mathrm{T}_1}$. Therefore
\begin{align*}
\Psi_{++}(\aalpha)  \contr & - \frac{K_{- \circ}(\aalpha) (k_1^2 - \alpha_1^2 -\alpha_2^2)}{ K_{- \circ}(\a_1, \a_2) (\alpha_1 - \a_1)} \frac{1}{(\alpha_2 - \a_2)(k_2^2 - \alpha_1^2 - \alpha_2^2)} \\
\psim & \frac{4 \a_1 \sqrt{k_2^2 - \a_2^2}}{\sqrt{k_2^2 - \a_2} - \a_1} \frac{1}{(\alpha_2 - \a_2)(k_2^2 - \alpha_1^2 - \alpha_2^2)}, \numberthis
\end{align*}
which is exactly of the form 
\begin{align*}
\Psi_{++}(\aalpha) \sim A \times g_1(\aalpha)^{-m_1} \times g_2(\aalpha)^{-m_2}
\end{align*}
for 
\begin{align*}
A = \frac{4 \a_1 \sqrt{k_2^2 - \a_2^2}}{\sqrt{k_2^2 - \a_2^2} -\a_1}, \quad \text{ and } \quad m_1 = m_2 = 1.
\end{align*}

\textbf{Step 3.} We can now use equation \eqref{eq.Ch5EstimateCrossing} to obtain the  wave component $\psi_{\mathrm{T}_1}$ corresponding to the crossing $\aalpha_{\mathrm{T}_1}$, and find
\begin{align*}
\psi_{\mathrm{T}_1}(\x) 
&= \frac{2 \a_1}{\a_1 -\sqrt{k^2_2 - \a_2^2}}
e^{- i (-x_1  \sqrt{k_2^2 - \a_2^2} + x_2 \a_2)} \mathcal{H}(x_1) \mathcal{H}\left(x_1 \a_2 + x_2  \sqrt{k_2^2 - \a_2^2} \right). \numberthis \label{eq.Ch5psiT1}
\end{align*}

\noindent \textbf{The transmitted wave $\psi_{\mathrm{T}_2}$.} Now consider the point $\aalpha^{\star} = \aalpha_{\mathrm{T}_2}$. The corresponding  wave component is computed similarly: it is again sufficient to only analyse \eqref{eq.Ch5PsiCont}'s external additive term and, again, we can use \eqref{eq.Ch5EstimateCrossing} to find the  wave component  $\psi_{\mathrm{T}_2}$  corresponding to the crossing $\aalpha_{\mathrm{T}_2}$. This yields
\begin{align*}
\psi_{\mathrm{T}_2}(\x) 
& = \frac{ 2 \a_2}{\a_2 - \sqrt{k_2^2 - \a_1^2}} e^{- i (x_1 \a_1 - x_2 \sqrt{k_2^2 - \a_1^2})} \mathcal{H}\left( x_1  \sqrt{k_2^2 - \a_1^2} + x_2 \a_1\right) \mathcal{H}(x_2). \label{eq.transmissionCoeff} \numberthis 
\end{align*}
The fields $\psi_{\mathrm{T}_1}$ and $\psi_{\mathrm{T}_2}$ are in perfect agreement with the corresponding wave-fields that are predicted by GO, and which are illustrated in Fig. \ref{fig:Wedge04}.

\subsubsection{The cylindrical diffracted wave}

Next, we investigate the contributing behaviour of $\Psi_{++}$ near  $\aalpha^{\star}= \aalpha_{\mathrm{C}_2}(\vartheta) = -k_2\tilde{\x}$, which is an isolated SOS on $\sigma'_{c_2}$ with respect to $\tilde{\x}$. The bridge and arrow configuration corresponding to this SOS is displayed in Fig. \ref{fig:PsiCW}.

\textbf{Step 1.} Choose $g(\aalpha) = k_2^2 - \alpha_1^2 - \alpha_2^2$ as the defining function for $\sigma_{c_2}$. By \citep{AssierShaninKorolkov2022} Appendix A, we then find 
$
\xi = 1/(4 k_2^2),
$
where $\xi$ is the constant introduced in Section \ref{sec:Ch5IntegralEstimation}. Since $\tilde{\x} = \nn$, and since the bridge and arrow configuration is as displayed in Fig. \ref{fig:PsiCW}, the sign factor is given by $s = +1$. 

\textbf{Step 2.} To compute the asymptotic behaviour of $\Psi_{++}$ as $\aalpha \to \aalpha_{\mathrm{C}_2}(\vartheta)$, we use the fact that $\Psi_{++} = \Phi/K $, where $\Phi  = -\Phi_{3/4} - P_{++}$, and that $\Phi$ is regular at $\aalpha_{\mathrm{C}_2}(\vartheta)$, as can be seen from Fig. \ref{fig:Ch5Singularities01}. Thus, as $\aalpha \to \aalpha_{\mathrm{C}_2}(\vartheta)$, we have
\begin{align}
\Psi_{++}(\aalpha)
& \sim \Phi(\aalpha_{\mathrm{C}_2}(\vartheta)) \frac{k_1^2 - k_2^2}{k_2^2 - \alpha_1^2 - \alpha_2^2}
\end{align}
which is exactly of the form 
\begin{align*}
\Psi_{++}(\aalpha) \sim  A  \times g(\aalpha)^{-m_{}}
\end{align*}
for 	
\begin{align*}
A = \Phi(\aalpha_{\mathrm{C}_2}(\vartheta)) \left(k_1^2 - k_2^2\right), \quad \text{ and } \quad m_{} = 1.
\end{align*}

\textbf{Step 3.} We can now use equation \eqref{eq.Ch5EstimateSOS} and find that the  wave component $\psi_{\mathrm{C}}$ corresponding to $\aalpha_{\mathrm{C}_2}(\vartheta)$ is given by
\begin{align}
\psi_{\mathrm{C}}(r, \vartheta) =  \frac{\Phi(k_2 \cos(\vartheta), k_2 \sin(\vartheta))}{8 \sqrt{2 \pi}} \left(\frac{k_1^2}{k_2^2} -1\right) e^{- i 3 \pi/4 } \times \frac{e^{-i k_2 r}}{\sqrt{k_2 r}}, \label{eq.PsiDiffracted}
\end{align}
which is supported only for $\vartheta \in (0, \pi/2)$. The quantity
\begin{align}
D_{\psi_{\mathrm{C}}}(\vartheta, \vartheta_{0}) = \frac{\Phi(k_2 \cos(\vartheta), k_2 \sin(\vartheta))}{8 \sqrt{2 \pi}} \left(\frac{k_1^2}{k_2^2} -1\right) e^{- i 3 \pi/4 }
\end{align}
corresponds to the (cylindrical) \emph{diffraction coefficient} of $\psi$.

\subsubsection{The lateral diffracted waves}\label{sec:FirstLateral}

When $k_2 >k_1$, the waves $\psi_{\mathrm{T}_1}, \ \psi_{\mathrm{T}_2},$ and $\psi_{\mathrm{C}_2}$ are the only  wave components of $\psi$. However, when $k_1 < k_2$, we obtain two additional waves corresponding to the contributing points $\aalpha_{\mathrm{L}_1}$ and $\aalpha_{\mathrm{L}_2}$, as shown in Fig. \ref{fig:Ch5Singularities04}, right. Thus, let $k_1 < k_2$. Moreover, let us assume that $k_1 \neq \sqrt{2} k_2$, for if $k_1 = \sqrt{2}k_2$, the points $\aalpha_{\mathrm{L}_2}$ and $\aalpha_{\mathrm{L}_1}$ coalesce, thereby leading to a triple crossing of singularities. We expect that the formulae derived in this Section remain valid in this case, but the detailed study of this will be the basis for future work.\\

\noindent \textbf{The lateral wave $\psi_{\mathrm{L}_2}$.}  We begin by studying the contributing behaviour of $\Psi_{++}$ near the point $\aalpha^{\star} = \aalpha_{\mathrm{L}_2}$, which is a transverse crossing of $\sigma'_{b_1}$ and $\sigma'_{c_2}$, with bridge and arrow configuration as displayed in Fig. \ref{fig:Ch5LateralTransmitted}. Let $\sigma_1 = \sigma_{b_1}$ and $\sigma_{2} = \sigma_{c_2}$. 

\textbf{Step 1.} Choose $g_1(\aalpha) = \alpha_1$ and $g_2(\aalpha) = k_2^2 - \alpha_1^2 - \alpha_2^2$. As before, the sign factors are given by $s_1 = s_2 = +1$.

\textbf{Step 2.} For $\varkappa >0$, we have $\aalpha_{\mathrm{L}_2} \in \LHP \times \LHP$. However, since the integral terms in formulae \eqref{eq.Ch5PsiCont} and \eqref{eq.Ch5PsiCont2} are singular at $\aalpha_{\mathrm{L}_2}$, we cannot simply study the contributing behaviour of these formulae's additive terms, but instead the integral terms need to be accounted for as well. This turns out to be easiest when using formula \eqref{eq.Ch5PsiCont2}, wherein the integral term exhibits a polar singularity as $\alpha_1 \to -k_1$. Therefore, we need to modify formula \eqref{eq.Ch5PsiCont2} to study this limit. To this end let us assume that $\aalpha$ is close to $\aalpha_{\mathrm{L}_2}$, and change the contour $P$ in \eqref{eq.Ch5PsiCont2} to some contour $P'$ as illustrated in Fig. \ref{fig:Ch5LateralContour031}. 
\begin{figure}[h!]
\centering
\includegraphics[width=\textwidth]{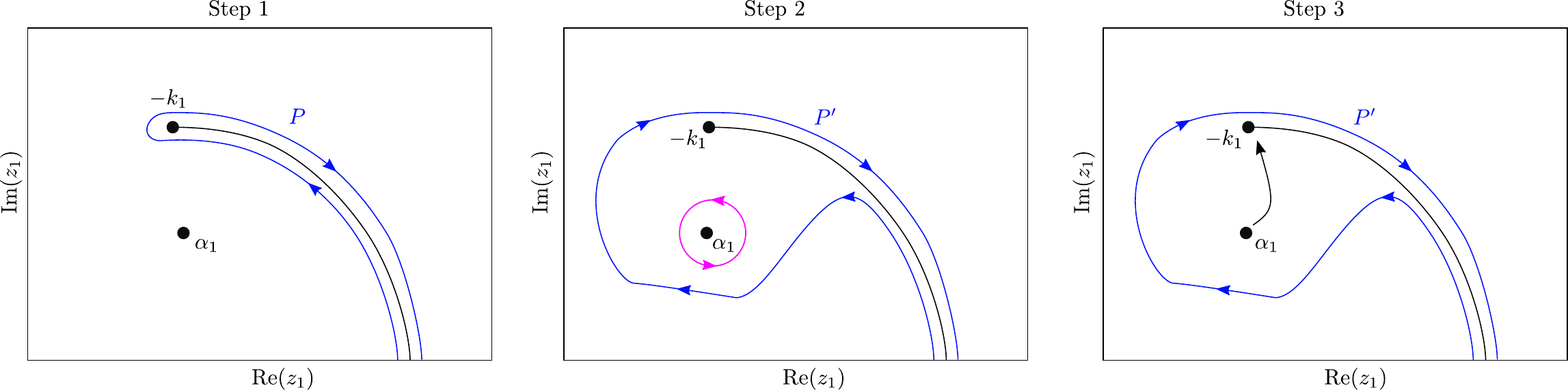}
\caption{Contour change that modifies equation \eqref{eq.Ch5PsiCont2} to allow for taking the limit $\alpha_1 \to -k_1$.}
\label{fig:Ch5LateralContour031}
\end{figure}
This picks up a residue of the integrand in \eqref{eq.Ch5PsiCont2} relative to counter-clockwise orientation.
Overall, we then obtain 
\begin{align*}
\Psi_{++}(\aalpha) =&  \frac{-i}{4 \pi K_{+ \circ}(\aalpha)} \ \int_{P'}\mathcal{I}_1(\aalpha; z_1)  dz_1  + \frac{ 1 }{2 K(\aalpha)} 
\left(
\frac{\left(k^2_2 - k_1^2 \right) \Psi_{++}\left(\alpha_1, \mmysqrt{k^2_1 -\alpha^2_1}\right)}{\left(\mmysqrt{k^2_1 -\alpha^2_1} - \alpha_2\right)\mmysqrt{k^2_1 -\alpha^2_1}}
\right) \\
& -  	\frac{K_{\circ-}(\mathfrak{a}_1,\alpha_2)P_{++}(\aalpha)}{K_{+\circ}(\aalpha) K_{-\circ}(\mathfrak{a}_1,\alpha_2)K_{\circ-}(\mathfrak{a}_1,\mathfrak{a}_2)}. \numberthis \label{eq.Lateral1.01}
\end{align*}
Now, the integral term as well as the second additive term in \eqref{eq.Lateral1.01} does not exhibit a crossing of singularities at $\aalpha = \aalpha_{\mathrm{L}_2}$ and therefore, as $\aalpha \to \aalpha_{\mathrm{L}_2}$, we obtain
\begin{align}
\Psi_{++}(\aalpha) \contr  \frac{ 1 }{2 K(\aalpha)} 
\left(
\frac{\left(k^2_2 - k_1^2 \right) \Psi_{++}\left(\alpha_1, \mmysqrt{k^2_1 -\alpha^2_1}\right)}{\left(\mmysqrt{k^2_1 -\alpha^2_1} - \alpha_2\right)\mmysqrt{k^2_1 -\alpha^2_1}}
\right)\! \cdot \label{eq.Lateral1.02}
\end{align}
Let us now analyse the behaviour of $\Psi_{++}(\alpha_1, \mmysqrt{k^2_1 -\alpha^2_1})$ as $\alpha_1 \to -k_1$. Since for $\varkappa >0$ we have $(\alpha_1, \mmysqrt{k^2_1 -\alpha^2_1}) \in \LHP \times \UHP,$ we need to use \eqref{eq.Ch5PsiCont} to analyse this limit, so
\begin{multline}
K_{\circ +}\left(\alpha_1, \mmysqrt{k_1^2-\alpha_1^2}\right) \Psi_{++}\left(\alpha_1,\mmysqrt{k_1^2-\alpha_1^2}\right)  
=
\frac{-i}{4 \pi } \ \int_{P} \mathcal{I}_2\left(\alpha_1, \mmysqrt{k_1^2 - \alpha_1^2}; z_2\right) dz_2\\
-  
\frac{K_{-\circ}(\alpha_1,\mathfrak{a}_2)P_{++}\left(\alpha_1, \mmysqrt{k_1^2-\alpha_1^2}\right)}{ K_{\circ-}(\alpha_1,\mathfrak{a}_2)K_{-\circ}(\mathfrak{a}_1,\mathfrak{a}_2)}\cdot \label{eq.Lateral1.03}
\end{multline}
Now, using \eqref{eq.I2Def}, one can show that the term on the right hand side of \eqref{eq.Lateral1.03} is regular as $\alpha_1 \to -k_1$. We can hence define the quantity $\mathcal{L}_{\psi_{\mathrm{L}_2}}(\vartheta_{0})$ by
\begin{align}
\mathcal{L}_{\psi_{\mathrm{L}_2}}(\vartheta_{0}) = \lim_{\alpha_1 \to -k_1}\left(	K_{\circ +}\left(\alpha_1, \mmysqrt{k_1^2 - \alpha_1^2}\right) \Psi_{++}\left(\alpha_1, \mmysqrt{k_1^2 - \alpha_1^2}\right) \right).
\end{align}
Note that, $\mathcal{L}_{\psi_{\mathrm{L}_2}}(\vartheta_{0})$ depends on the incident angle $\vartheta_{0}$, since the spectral function $\Psi_{++}$ exhibits this dependence (although it has been suppressed for brevity).

We therefore
obtain, as $\alpha_1 \to -k_1$, 	
\begin{align}
\Psi_{++}\left(\alpha_1,\mmysqrt{k_1^2-\alpha_1^2}\right) 
& \sim \frac{\mathcal{L}_{\psi_{\mathrm{L}_2}}(\vartheta_{0})}{K_{\circ +}\left(\alpha_1, \mmysqrt{k_1^2-\alpha_1^2}\right)},
\end{align}
and thus, as $\aalpha \to \aalpha_{\mathrm{L}_2}$, we find from \eqref{eq.Lateral1.02} that
\begin{align*}
\Psi_{++}(\aalpha) \contr  \frac{ 1 }{2 K(\aalpha)} 
&	\left(
\frac{\left(k^2_2 - k_1^2 \right)}{\left(\mmysqrt{k^2_1 -\alpha^2_1} - \alpha_2\right)\mmysqrt{k^2_1 -\alpha^2_1}}
\frac{\mathcal{L}_{\psi_{\mathrm{L}_2}}(\vartheta_{0})}{K_{\circ +}\left(\alpha_1, \mmysqrt{k_1^2-\alpha_1^2}\right)} 
\right)\! \cdot \numberthis
\end{align*}
Now, a lengthy but straightforward computation shows that 
\begin{align}
\frac{\left(k^2_2 - k_1^2 \right)}{\left(\mmysqrt{k^2_1 -\alpha^2_1} - \alpha_2\right)\mmysqrt{k^2_1 -\alpha^2_1}}
\frac{1}{K_{\circ +}\left(\alpha_1, \mmysqrt{k_1^2-\alpha_1^2}\right)}  
\contr \frac{- 4 \mmysqrt{k_2^2 - k_1^2}\sqrt{2k_1} \mmysqrt{k_1+ \alpha_1}}{k_2^2-k_1^2} 
\end{align}	
and therefore, we find 
\begin{align*}
\Psi_{++}(\aalpha) \contr & 
2\sqrt{k_2^2 - k_1^2}\sqrt{2k_1} \mathcal{L}_{\psi_{\mathrm{L}_2}} (\vartheta_{0}) \times \frac{g_1(\aalpha)^{1/2}}{g_2(\aalpha)}\cdot \numberthis \label{eq.Lateral1.04}
\end{align*}
This is exactly of the form
\begin{align*}
\Psi_{++}(\aalpha) \contr A \times g_1(\aalpha)^{-m_1} \times g_2(\aalpha)^{-m_2}
\end{align*}
with $m_1 = -1/2, \ m_2 = 1$, and 
\begin{align*}
A  = 2\sqrt{k_2^2 - k_1^2}\sqrt{2k_1} \mathcal{L}_{\psi_{\mathrm{L}_2}}(\vartheta_{0}).
\end{align*}

\textbf{Step 3.} Let us introduce the \emph{lateral diffraction coefficient} 
\begin{align*}
D_{\psi_{\mathrm{L}_2}}(\vartheta_{0}) &=  \frac{2(k_2^2 -k_1^2)^{3/4} \sqrt{k_1} e^{ i 3\pi/4} \mathcal{L}_{\psi_{\mathrm{L}_2}}(\vartheta_{0})}{ \sqrt{\pi}}\cdot
\end{align*}
Then, using \eqref{eq.Ch5EstimateCrossing} to obtain the  wave component $\psi_{\mathrm{L}_2}$ corresponding to the crossing at $\aalpha_{\mathrm{L}_2}$, we find that
\begin{align*}
\psi_{\mathrm{L}_2}(\x)  = & D_{\psi_{\mathrm{L}_2}}(\vartheta_{0})\frac{e^{i ( k_1 x_1  + \sqrt{k_2^2 - k_1^2} x_2)}}{|x_1 \sqrt{k^2_2 -k_1^2} - x_2 k_1|^{3/2}} \mathcal{H}\left(\sqrt{k_2^2 - k_1^2}x_1 - k_1 x_2\right)\mathcal{H}(x_2).\numberthis \label{eq.Lateral1.Wave} 
\end{align*}

\noindent \textbf{The lateral wave $\psi_{\mathrm{L}_1}$.} We now consider the point $\aalpha^{\star} = \aalpha_{\mathrm{L}_1}$. Using formula \eqref{eq.Ch5PsiCont} to obtain the contributing asymptotic behaviour of $\Psi_{++}$ as $\aalpha \to \aalpha_{\mathrm{L}_1}$, all corresponding computations are analogous to those carried out previously for the point $\aalpha_{\mathrm{L}_2}$, so we only give the result. Again, we refer to \citep{KunzThesis} for a more detailed discussion. Particularly, we find that the function $
K_{+ \circ}(\mmysqrt{k_1^2 - \alpha_2^2}, \alpha_2) \Psi_{++}(\mmysqrt{k_1^2 - \alpha_2^2}, \alpha_2)
$
is regular as $\alpha_2 \to -k_1$, and we can hence define the quantity $\mathcal{L}_{\psi_{\mathrm{L}_1}}(\vartheta_{0})$ via 
\begin{align}
\mathcal{L}_{\psi_{\mathrm{L}_1}}(\vartheta_{0}) = \lim_{\alpha_2 \to -k_1} \left(	K_{+ \circ}\left(\mmysqrt{k_1^2 - \alpha_2^2}, \alpha_2\right) \Psi_{++}\left(\mmysqrt{k_1^2 - \alpha_2^2}, \alpha_2\right)\right).
\end{align}
Then, upon introducing the lateral diffraction coefficient
\begin{align*}
D_{\psi_{\mathrm{L}_1}}(\vartheta_{0}) = \frac{ 2(k_2^2 -k_1^2)^{3/4} \sqrt{k_1} e^{ i 3\pi/4} \mathcal{L}_{\psi_{\mathrm{L}_1}}(\vartheta_{0})}{ \sqrt{\pi}} 
\end{align*}
we find that
\begin{align}
\psi_{L 1}(\x) = D_{\psi_{\mathrm{L}_1}}(\vartheta_0)
\frac{e^{i ( \sqrt{k_2^2 - k_1^2} x_1  + k_1 x_2)}}{|x_2 \sqrt{k^2_2 -k_1^2} -x_1 k_1  |^{3/2}} \mathcal{H}\left(k_1 x_1 - \sqrt{k_2^2 -k_1^2} x_2 \right) \mathcal{H}(x_1).
\label{eq.Lateral2.Wave}
\end{align}

Formulae \eqref{eq.Lateral1.Wave} and \eqref{eq.Lateral2.Wave} are in perfect agreement with the form of lateral waves we expect from the studies outlined in Section \ref{sec:LateralInformal}. When $k_2 < k_1$, we set $\psi_{\mathrm{L}_1} = \psi_{\mathrm{L}_2} \equiv 0$.

\subsection{Wave components of $\phi$}\label{sec:FarFieldPhi}	

We now proceed to study the remaining transversal crossings as well as the points which are isolated SOS of $\Phi_{3/4}$.
When $k_1 < k_2$, these (potentially) contributing points are given by 
\begin{align*}
\aalpha_{\mathrm{R}_1} = \left(-\sqrt{k_1^2 - \a_2} , \a_2\right), \ 	\aalpha_{\mathrm{R}_2} = \left(\a_1,  \sqrt{k_1^2 - \a_1^2}\right), \ \aalpha_{\mathrm{C}_1}(\tilde{\x}) = - k_1 \tilde{\x},
\end{align*}
and, when $k_1 > k_2$, we obtain the additional contributing points
\begin{align*}
&\aalpha_{\mathrm{L}_1} = \left(-k_2, \sqrt{k_1^2 -k_2^2}\right), \ \aalpha_{\mathrm{L}_2} = \left(\sqrt{k_1^2 -k_2^2}, -k_2\right).
\end{align*}
These points are shown in Fig. \ref{fig:Ch5Singularities03}, and the corresponding bridge and arrow configurations are as displayed in Fig. \ref{fig:Ch5Singularities01}. Here, we have re-used the notation $\aalpha_{\mathrm{L}_1}$ and $\aalpha_{\mathrm{L}_2}$, which we already used for crossings of singularities of $\Psi_{++}$. The rationale for this choice will be explained in Section \ref{sec:Ch5LateralPhi}.

\begin{figure}[h!]
\centering
\includegraphics[width=\textwidth]{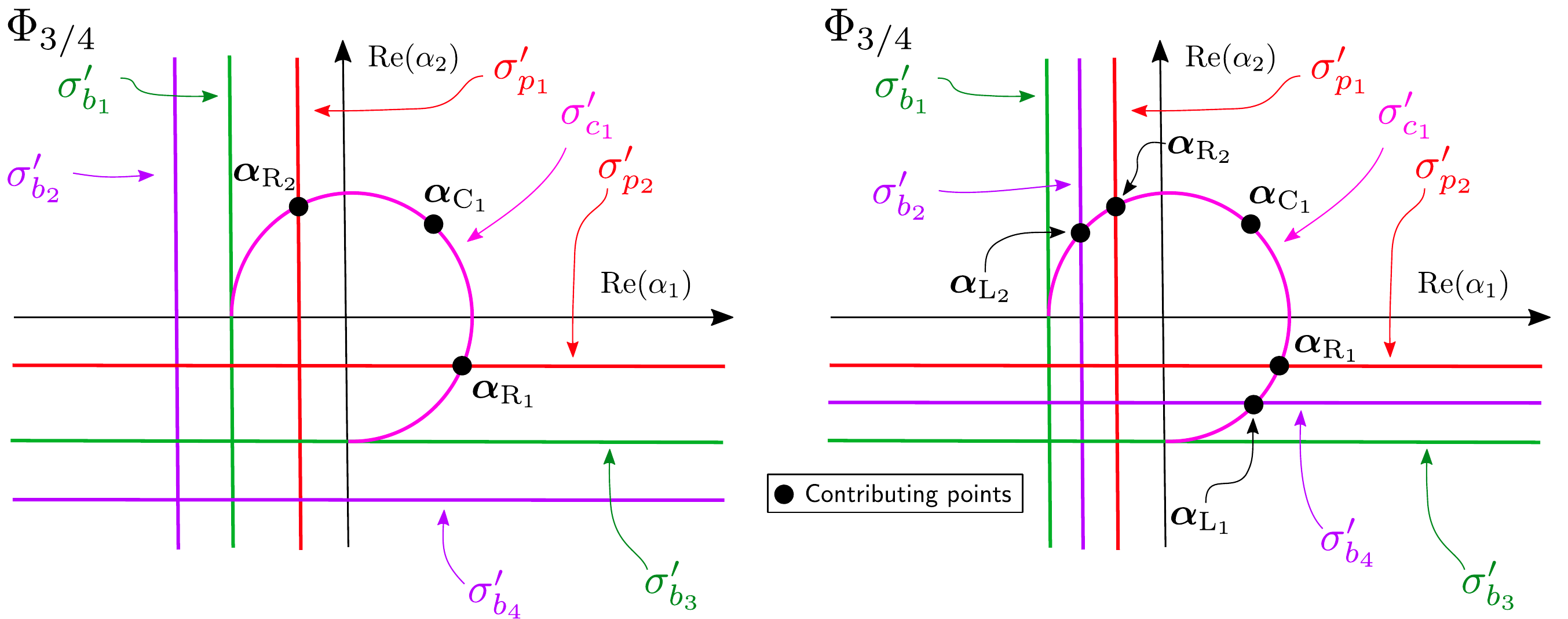}
\caption{Contributing points of $\Phi_{3/4}$ for $k_1 < k_2$ (left) and $k_1 > k_2$ (right).}
\label{fig:Ch5Singularities03}
\end{figure}

We often omit the detailed calculations in the following analysis, as they are similar to calculations shown previously. The details can be found in \citep{KunzThesis}, Chapter 5.

\subsubsection{The reflected waves}\label{sec:FirstReflectphi}
The reflected waves $\phi_{\mathrm{R}_1}$ and $\phi_{\mathrm{R}_2}$ corresponding to the points $\aalpha_{\mathrm{R}_1}$ and $\aalpha_{\mathrm{R}_2}$ are obtained similarly to how the transmitted wave $\psi_{\mathrm{T}_1}$ was obtained in Section \ref{sec:FirstTrans}, so we omit the details (only the additive external term in \eqref{eq.Ch5PsiCont2} needs to be considered). 

One then finds 
\begin{align}
\phi_{\mathrm{R}_1}(\x) = 	\frac{\a_1 + \sqrt{k^2_2 - \a^2_2} }{\a_1 - \sqrt{k_2^2-\a^2_2}} e^{-i(-\mathfrak{a}_1x_1 + \mathfrak{a}_2 x_2)}\mathcal{H}(-x_1 \a_2  - x_2 \a_1) \mathcal{H}(-x_1). \label{eq.SimpleRefl.02}
\end{align}
and 
\begin{align*}
\phi_{\mathrm{R}_2}(\x) 
& =  \frac{\mathfrak{a}_2 + \sqrt{k^2_2 - \a^2_1}}{\mathfrak{a}_2 - \sqrt{k^2_2 - \a^2_1}} e^{-i (x_1 \a_1 - x_2 \a_2)} \mathcal{H}\left(-x_1 \a_2 -  x_2 \a_1 \right) \mathcal{H}(-x_2). \numberthis \label{eq.ReflectionCoeff}
\end{align*}
These formulae agree perfectly with the corresponding wave-fields predicted by GO (which are displayed in Fig. \ref{fig:Wedge04}).

\subsubsection{The cylindrical diffracted wave}\label{sec:PhiDiffr}	

Let us now study the contributing behaviour of $\Phi$ near $\aalpha^{\star} = \aalpha_{\mathrm{C}_1}(\vartheta) = -k_1 \tilde{\x}$. This is an isolated SOS on $\sigma'_{c_1}$ with respect to $\tilde{\x}$. The bridge and arrow configuration is displayed in Fig. \ref{fig:SaddleSC}. Observing that $\Psi_{++}$ is regular at $\aalpha_{\mathrm{C}_1}(\vartheta)$, the  wave component $\phi_{\mathrm{C}}$ corresponding to the SOS $\aalpha_{\mathrm{C}_1}(\vartheta)$ can be computed similarly to how the wave $\psi_{\mathrm{C}}$ was computed, so we omit the details. We find that 
\begin{align}
\phi_{\mathrm{C}}(\x) & =  \frac{\Psi_{++}(k_1 \cos(\vartheta), k_1 \sin(\vartheta))}{8 \sqrt{2 \pi}} \left(1 - \frac{k_2^2}{k_1^2} \right) e^{- i 3 \pi /4} \times \frac{e^{-i k_1 r}}{\sqrt{k_1 r}}. \label{eq.PhiDiffracted} 
\end{align}
The quantity
\begin{align*}
D_{\phi_{\mathrm{C}}}(\vartheta, \vartheta_{0}) = \frac{\Psi_{++}(k_1 \cos(\vartheta), k_1 \sin(\vartheta))}{8 \sqrt{2 \pi}} \left(1 - \frac{k_2^2}{k_1^2} \right) e^{- i 3 \pi /4}
\end{align*}
corresponds to the (cylindrical) diffraction coefficient of $\phi_{\ssc}$.

\subsubsection{The lateral diffracted waves}\label{sec:Ch5LateralPhi}

If $k_1 < k_2$, the  wave components of $\phi_{\ssc}$ are given by $\phi_{\mathrm{R}_1}, \ \phi_{\mathrm{R}_2},$ and $\phi_{\mathrm{C}_1}$. In this case, the  wave components of $\psi$ include the lateral waves $\psi_{\mathrm{L}_1}$ and $\psi_{\mathrm{L}_2}$. However, when $k_1 > k_2$, these lateral waves are present on the wedges exterior and correspond to the crossings $\sigma'_{b_2} \cap \sigma'_{c_1}$ and $\sigma'_{b_4} \cap \sigma'_{c_1}$, respectively, as we will prove in this section. We label these crossings as $\aalpha_{\mathrm{L}_2}$ and $\aalpha_{\mathrm{L}_1}$, respectively. This agrees with how we labelled the crossings of $\sigma'_{b_3}$ and $\sigma'_{b_1}$ with $\sigma'_{c_2}$, which led to the lateral waves on the wedge's interior. However, there is no ambiguity whether these points correspond to crossings of singularities of $\Phi_{3/4}$, or $\Psi_{++}$: whenever $k_1 < k_2$, these points correspond to crossings of singularities of $\Psi_{++}$ and whenever $k_2 < k_1$, they correspond to crossings of singularities of $\Phi_{3/4}$. Thus, in either case, the points $\aalpha_{\mathrm{L}_1}$ and $\aalpha_{\mathrm{L}_2}$ correspond to the total wave-field's (only) two lateral waves. Let $k_1 > k_2$. \\

\noindent \textbf{The lateral wave $\phi_{\mathrm{L}_2}$.} Let us begin by studying the contributing behaviour of $\Phi_{3/4}$ near the point $\aalpha^{\star}= \aalpha_{\mathrm{L}_2}$. To this end, we study the contributing behaviour of $-K \Psi_{++} = - \Phi$. Since $\Phi_{3/4} = -\Phi - P_{++}$, and since $P_{++}$ does not exhibit a crossing of singularities at $\aalpha_{\mathrm{L}_2}$, the contributing behaviour of $-\Phi$ completely determines the contributing behaviour of $\Phi_{3/4}$. Let $\sigma_{1} = \sigma_{c_1}$ and $\sigma_2 = \sigma_{b_2}$. Similarly to Section \ref{sec:FirstLateral}, we assume that $k_2 \neq \sqrt{2}k_1$.

\textbf{Step 1.} Choose $g_1(\aalpha) = k_1^2 - \alpha_1^2 - \alpha_2^2$ and  $g_2(\aalpha) = \alpha_2 + k_2$. Since $\boldsymbol{e}_r = - \nn_1$ and $\boldsymbol{e}_{\Re(\alpha_1)} = \nn_2$, and since  the bridge and arrow configuration at $\aalpha_{\mathrm{L}_2}$ is as displayed in Fig. \ref{fig:Ch5LateralReflected}, we find that the sign factors are given by $s_1=s_2=+1$.

\textbf{Step 2.} For $\varkappa >0$, we have $\aalpha_{\mathrm{L}_2} \in \LHP \times \UHP$. Let us use the formula \eqref{eq.Ch5PsiCont2} to study the contributing asymptotic behaviour of $\Phi$. Then, just as in Section \ref{sec:FirstLateral}, we find that the contributing asymptotic behaviour of $\Phi$ is given by 
\begin{align}
-\Phi(\aalpha) \contr  - \frac{1}{2} 
\left(
\frac{\left(k^2_2 - k_1^2 \right) \Psi_{++}\left(\alpha_1, \mmysqrt{k^2_1 -\alpha^2_1}\right)}{\left(\mmysqrt{k^2_1 -\alpha^2_1} - \alpha_2\right)\mmysqrt{k^2_1 -\alpha^2_1}}
\right), \text{ as } \aalpha \to \aalpha_{\mathrm{L}_2}. \label{eq.Ch5LateralPhi02}
\end{align}
Now, for $\alpha_1$ close to $-k_2$, we have	
\begin{multline}
K_{\circ +}\left(\alpha_1, \mmysqrt{k^2_1 -\alpha^2_1}\right)\Psi_{++}\left(\alpha_1, \mmysqrt{k^2_1 -
	\alpha^2_1}\right)   = 
\frac{-i}{4 \pi} \int_{P}\mathcal{I}_2\left(\alpha_1, \mmysqrt{k^2_1 -\alpha^2_1}; z_2\right) dz_2 \\ - \frac{K_{-\circ}(\alpha_1,\mathfrak{a}_2)P_{++}\left(\alpha_1, \mmysqrt{k^2_1 -\alpha^2_1}\right)}{K_{\circ+}\left(\alpha_1, \mmysqrt{k^2_1 -\alpha^2_1}\right) K_{\circ-}(\alpha_1,\mathfrak{a}_2)K_{-\circ}(\mathfrak{a}_1,\mathfrak{a}_2)}\cdot \label{eq.Ch5LateralPhi03} \numberthis
\end{multline}	

Contrary to how the lateral waves on the wedges interior were analysed, the integral term in \eqref{eq.Ch5LateralPhi03} is non-regular at $\alpha_1 = -k_2$ (and so is the external additive term), so we cannot directly proceed as in Section \ref{sec:FirstLateral}. 

Now, a lengthy but straightforward calculation shows that
\begin{multline}
K_{\circ +}\left(\alpha_1, \mmysqrt{k_1^2 - \alpha_1^2}\right)\Psi_{++}\left(\alpha_1, \mmysqrt{k_1^2 - \alpha_1^2}\right) = \\ \frac{-i}{4 \pi}\int_P\left(\mathcal{I}_2\left(\alpha_1, \mmysqrt{k_1^2 - \alpha_1^2}; z_2\right) - \mathcal{I}_2\left(\alpha_1, - \mmysqrt{k_2^2 - \alpha_1^2}; z_2\right)\right) dz_2 
+ \frac{\Phi\left(\alpha_1, -\mmysqrt{k_2^2 - \alpha_1^2}\right)}{K_{\circ -}\left(\alpha_1, - \mmysqrt{k_2^2 - \alpha_1^2}\right)} \\
+ \frac{P_{++}\left(\alpha_1, - \mmysqrt{k_2^2 - \alpha_1^2}\right) - P_{++}\left(\alpha_1, \mmysqrt{k_1^2 - \alpha_1^2}\right)}{K_{\circ -}(\alpha_1, \a_2)}\cdot \label{eq.Blablabla} \numberthis
\end{multline}	
The first and the third term on the right hand side of \eqref{eq.Blablabla} can be shown to be regular as $\alpha_1 \to -k_2$. Therefore, using \eqref{eq.Ch5LateralPhi02}, we conclude that
\begin{align*}
\Phi(\aalpha) \contr -  &\frac{1}{2} \frac{(k_2^2 -k_1^2)}{\left(\mmysqrt{k_1^2 - \alpha_1^2} - \alpha_2\right)\mmysqrt{k_1^2 - \alpha_1^2}} \frac{1}{K_{\circ +}\left(\alpha_1, \mmysqrt{k_1^2 - \alpha_1^2}\right)} \frac{\Phi\left(\alpha_1, -\mmysqrt{k_2^2 - \alpha_1^2}\right)}{K_{\circ -}\left(\alpha_1, - \mmysqrt{k_2^2 - \alpha_1^2}\right)},
\end{align*}
$\text{ as } \aalpha \to \aalpha_{\mathrm{L}_2},$ and it can be shown, just as in Section \ref{sec:FirstLateral}, that 
the function 
$
\Phi(\alpha_1, -\mmysqrt{k_2^2 - \alpha_1^2})/K_{\circ -}(\alpha_1, - \mmysqrt{k_2^2 - \alpha_1^2})
$
is regular as $\alpha_1 \to -k_2$. We can thus define the quantity
\begin{align}
\mathcal{L}_{\phi_{\mathrm{L}_2}}(\vartheta_{0}) = \lim_{\alpha_1 \to -k_2} 	\frac{\Phi\left(\alpha_1, -\mmysqrt{k_2^2 - \alpha_1^2}\right)}{K_{\circ -}\left(\alpha_1, - \mmysqrt{k_2^2 - \alpha_1^2}\right)}\cdot
\end{align}
Proceeding as in Section \ref{sec:FirstLateral}, we find that
\begin{align*}
\Phi(\aalpha) \contr A \times g_1(\aalpha)^{-m_1} \times g_2(\aalpha)^{-m_2}, \text{ as } \aalpha \to \aalpha_{\mathrm{L}_2}
\end{align*}
with $m_1 = 1$, $m_2 = -1/2$ and
\begin{align*}
A = - 2 \sqrt{k_1^2 -k_2^2}\sqrt{2 k_2} \mathcal{L}_{\phi_{\mathrm{L}_2}}(\vartheta_{0}).
\end{align*}		

\textbf{Step 3.} We use \eqref{eq.Ch5EstimateCrossing} to compute the corresponding  wave component $\phi_{\mathrm{L}_1}$. Introducing the lateral diffraction coefficient 
\begin{align*}
D_{\phi_{\mathrm{L}_2}}(\vartheta_{0}) = - \frac{2(k_1^2 -k_2^2)^{3/4}\sqrt{k_2}e^{i 3 \pi/4}  \mathcal{L}_{\phi_{\mathrm{L}_2}}(\vartheta_{0})}{\sqrt{\pi}}
\end{align*} 
we find that
\begin{align*}
\phi_{\mathrm{L}_2}(\x)  = D_{\phi_{\mathrm{L}_2}}(\vartheta_{0})	\frac{e^{i(k_2 x_1 - \sqrt{k_1^2 -k_2^2} x_2)}}{|x_1 \sqrt{k_1^2 -k_2^2} + x_2 k_2|^{3/2}} \mathcal{H}\left(\sqrt{k_1^2 - k_2^2} x_1 + k_2 x_2 \right) \mathcal{H}(-x_2).  \label{eq.Ch5phiL2}  \numberthis
\end{align*}

\noindent \textbf{The lateral wave $\phi_{\mathrm{L}_1}$.} It remains to study the contributing asymptotic behaviour of $\Phi$ near $\aalpha_{\mathrm{L}_1}$. This is done similarly to how the contributing asymptotics near $\aalpha_{\mathrm{L}_2}$ have been found, so we  omit the details.  Again, we can define the quantity 
\begin{align*}
\mathcal{L}_{\phi_{\mathrm{L}_1}}(\vartheta_{0}) = \lim_{\alpha_2 \to -k_2}	\frac{\Phi\left(-\mmysqrt{k_2^2 - \alpha_2}, \alpha_2\right)}{K_{-\circ}\left(-\mmysqrt{k_2^2 - \alpha_2}, \alpha_2\right)},
\end{align*}
and upon introducing the lateral diffraction coefficient
\begin{align}
D_{\phi_{\mathrm{L}_1}}(\vartheta_{0}) = - \frac{2(k_1^2 -k_2^2)^{3/4}\sqrt{k_2}e^{i 3 \pi/4} 	\mathcal{L}_{\phi_{\mathrm{L}_1}}(\vartheta_{0})}{\sqrt{\pi}},
\end{align}
we find that 
\begin{align}
\phi_{\mathrm{L}_1} = D_{\phi_{\mathrm{L}_1}}(\vartheta_0) \frac{e^{i(- \sqrt{k_1^2 -k_2^2} x_1 + k_2 x_2)}}{|x_1 k_2 + x_2 \sqrt{k_1^2 -k_2^2}|^{3/2}}\mathcal{H}\left(k_1 x_1 + \sqrt{k_1^2 -k_2^2} x_2\right)\mathcal{H}(x_2). \label{eq.Ch5lateralPhi2}
\end{align}

As in Section \ref{sec:FirstLateral}, formulae \eqref{eq.Ch5phiL2} and \eqref{eq.Ch5lateralPhi2} are in perfect agreement with the form of lateral waves we expect from the studies outlined in Section \ref{sec:LateralInformal}. When $k_1 < k_2$, we set $\phi_{\mathrm{L}_1} = \phi_{\mathrm{L}_2} \equiv 0$. 

We have thus proved the correctness of \eqref{eq.Ch1PWFarfield01} in the simple case, where the fields' GO components which are illustrated in Fig. \ref{fig:Wedge04} (left) are given by $\psi_{\GO} = \psi_{\mathrm{T}_1} + \psi_{\mathrm{T}_2}$ and $\phi_{\GO} = \phi_{\iin} + \phi_{\mathrm{R}_1}  + \phi_{\mathrm{R}_2}$. The diffracted far-field, consisting of cylindrical and lateral diffracted waves, is described by equations \eqref{eq.PsiDiffracted}, \eqref{eq.Lateral1.Wave}, \eqref{eq.Lateral2.Wave}, \eqref{eq.PhiDiffracted}, \eqref{eq.Ch5phiL2}, and \eqref{eq.Ch5lateralPhi2}, and is illustrated in Fig. \ref{fig:Ch5SimpleFarField}.

\begin{figure}[h!]
\centering
\includegraphics[width=\textwidth]{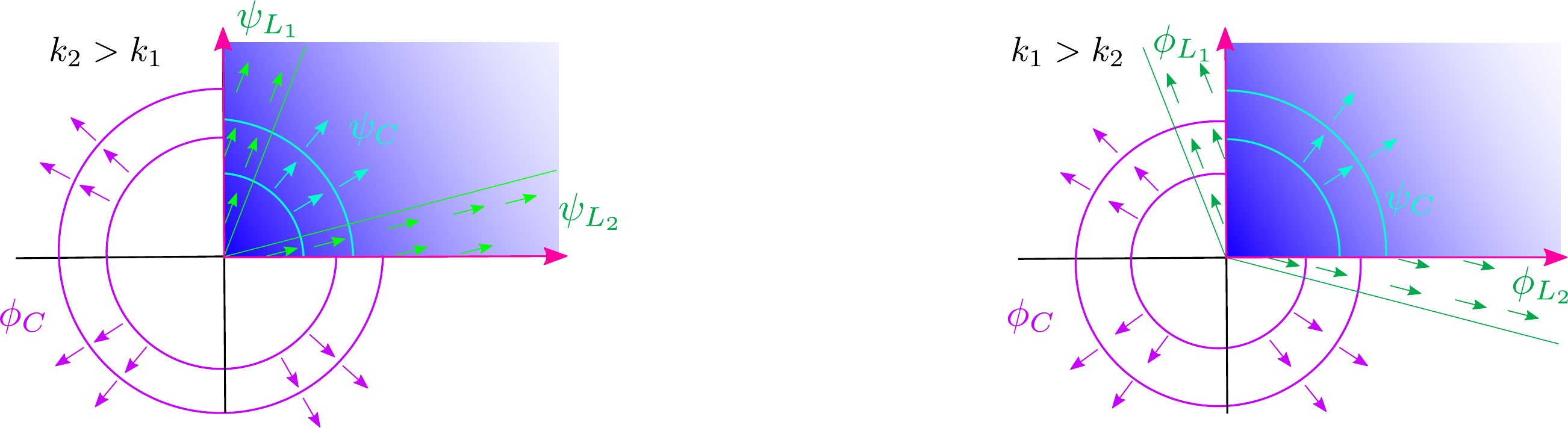}
\caption{Diffracted  wave components.}
\label{fig:Ch5SimpleFarField}
\end{figure}

\section{The complicated case}\label{sec:Complicated}		

Let us now consider the case $\vartheta_0 \in (\pi/2, \pi)$. Note that in this case, we cannot directly impose the Sommerfeld radiation condition via the limiting absorption principle, although, of course, a radiation condition needs to be imposed. The failure of defining this via the absorption principle is due to the fact that for positive imaginary part $\varkappa >0$ of $k_1$ and $k_2$, such incident angle changes the sign of $\a_2$: whereas in the simple case discussed in Section \ref{sec:Simple} we are guaranteed $\Im(\a_1), \Im(\a_2) < 0$ whenever $\varkappa >0$ we now have $\Im(\a_1) <0,$ and $\Im(\a_2) > 0$ whenever $\varkappa >0$.  We circumvent this issue by working directly in Fourier space. That is, we treat $\vartheta_0$ as an analytic parameter within the formulae for analytic continuation of $\Psi_{++}$ and thus obtain formulae for $\Psi_{++}$ when $\vartheta_0 \in (\pi/2, \pi)$. Here, by `analytic parameter' we mean that the spectral functions depend analytically on $\vartheta_{0}$. The radiation condition is then formulated by imposing the \emph{continuity of bypass}. If the arrow points to a given side of $\sigma_j'$ in the simple case, then it must point to this side of $\sigma_j'$ in the complicated  case. By uniqueness of analytic continuation and Stokes' theorem, this condition uniquely determines $\psi$ and $\phi_{\ssc}$ in the complicated case. Defining the physical fields in this way \emph{seems} to be a sensible way to impose the radiation condition for the following reasons. 
\begin{enumerate}[]
\item 1.  It leads to the GO components one would expect for the complicated case, as we will prove during this Section. \item 2. It prohibits the existence of any unphysical wave-fields.
\item 3. It implies that the physical fields depend continuously on the incident angle.
\end{enumerate}
However, note that the analytic dependence of $\Psi_{++}$ on $\vartheta_0$, of which the preceding points are a consequence, is, at this point, only an assumption. For a more detailed discussion involving the intricacies of formulating the radiation condition in the complicated case, we refer to \citep{AssierShaninKorolkov2023}, Section 2.

As mentioned in \citep{KunzAssierAC}, in the complicated case  we also obtain new singularities within the formulae for analytic continuation. Namely, the external additive term in \eqref{eq.Ch5PsiCont} becomes singular at $$\sigma_{sp} = \left\{\aalpha \in \mathbb{C}^2| \ \aalpha_1 \equiv  - \mmysqrt{k_2^2 - \a_2^2}\right\},$$ which, for $\vartheta_{0} \in (\pi/2,\pi)$, is a polar singularity. Here, the subscript $_{sp}$ stands for `secondary pole', since, as we shall see, it will lead to the existence of secondary reflected and transmitted waves. Note that for $\vartheta_0 = \pi$, we have $\sigma_{sp} = \sigma_{b_2}$. Therefore, imposing the aforementioned continuity of bypass, we find that the bridge and arrow configuration on $\sigma_{sp}$ is as displayed in Fig. \ref{fig:BridgeArrowCompl}. Moreover, the singularity $\sigma_{p_2}$ changes half-plane during such change of incident angle since the parameter $\a_2$ changes its sign (as is displayed in Fig. \ref{fig:BridgeArrowCompl}).  Note that formulae \eqref{eq.Ch5PsiCont} and \eqref{eq.Ch5PsiCont2} remain valid in the complicated case.	

The singularities for the complicated case and the corresponding bridge and arrow configurations are displayed in Fig. \ref{fig:BridgeArrowCompl}. We now discuss how this change of singularity structure affects the physical far-field. \\

\begin{figure}
\centering
\includegraphics[width=\textwidth]{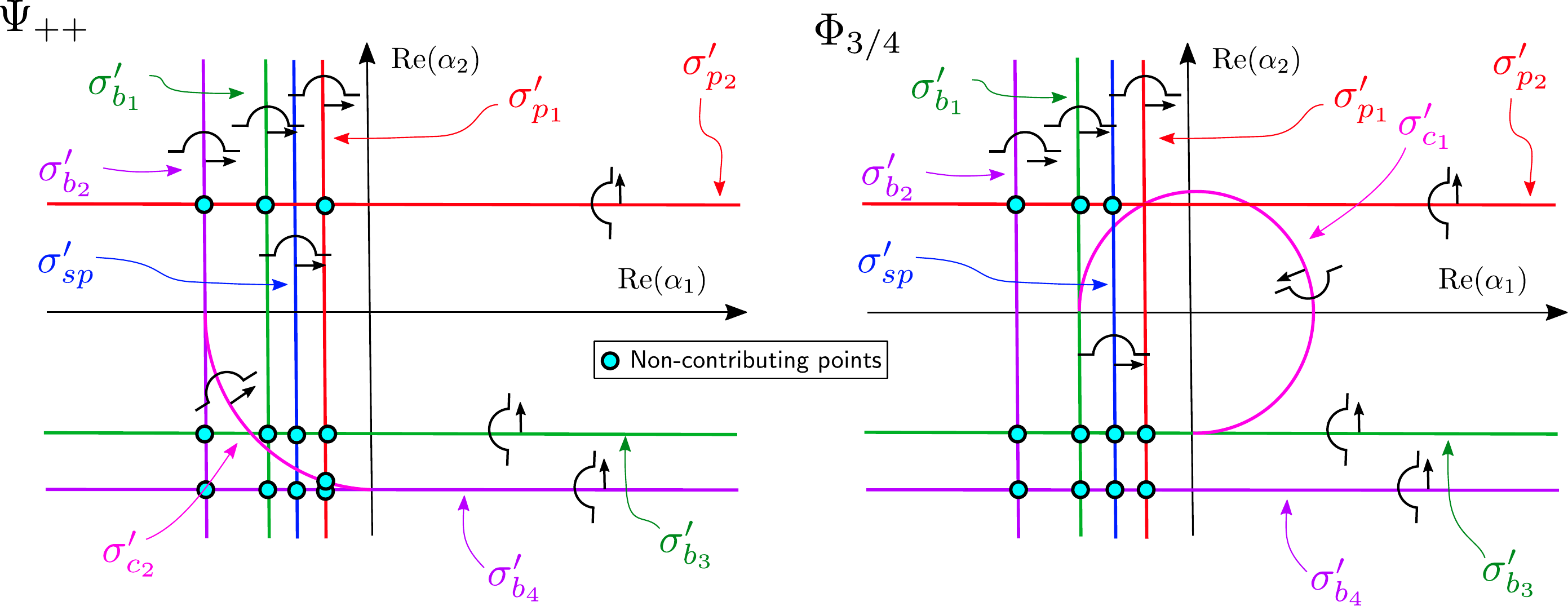}
\caption{Real traces of the spectral functions' irreducible singularities, corresponding bridge and arrow configuration, and non-contributing points.}
\label{fig:BridgeArrowCompl}	\label{fig:Ch5ComplNon-Contr}
\end{figure}

\noindent \textbf{The diffracted waves.}  The cylindrical and lateral diffracted waves are computed exactly as in the simple case, since the points $\aalpha_{\mathrm{C}_1}, \aalpha_{\mathrm{C}_2}$, $\aalpha_{\mathrm{L}_1}$, and $\aalpha_{\mathrm{L}_2}$ whose contributions yield these waves are unaffected by the change of incident angle. That is, formulae \eqref{eq.PsiDiffracted} (for $\psi_{\mathrm{C}}$), \eqref{eq.PhiDiffracted} (for $\phi_{\mathrm{C}}$), \eqref{eq.Lateral1.Wave} (for $\psi_{\mathrm{L}_2}$), \eqref{eq.Lateral2.Wave} (for $\psi_{\mathrm{L}_1}$), \eqref{eq.Ch5phiL2} (for $\phi_{\mathrm{L}_2}$), and \eqref{eq.Ch5lateralPhi2} (for $\phi_{\mathrm{L}_1}$) remain valid in the complicated case. \\

\noindent \textbf{Non-contributing points.} Similar to how the non-contributing points in the simple case were analysed in Section \ref{sec:Ch5Additive01}, it can be shown that the points highlighted in Fig. \ref{fig:Ch5ComplNon-Contr} are non-contributing.  \\

To obtain the  wave components of $\psi$ and $\phi$, it thus remains to consider the contributing behaviour of $\Psi_{++}$ at the points
\begin{align*}
\aalpha_{\mathrm{PT}} = \left(-\mmysqrt{k_2^2 - \a^2_2}, \a_2\right), \ \aalpha_{\mathrm{SR}} = \left(-\mmysqrt{k_2^2 - \a^2_2}, - \a_2 \right),
\end{align*}
and the contributing behaviour of $\Phi_{3/4}$ at the points
\begin{align*}
\aalpha_{\mathrm{PR}} = \left(-\a_1, \a_2\right), \ \aalpha_{\mathrm{SH}} =  \left(\a_1, \a_2\right), \ \aalpha_{\mathrm{ST}} = \left(-\mmysqrt{k_2^2 - \a_2^2}, \mmysqrt{k_1^2 - k_2^2 + \a_2^2} \right).
\end{align*}
Here, as in Section \ref{sec:InformalFarFieldWedge}, the subscripts $_{\mathrm{PT}}$, $_{\mathrm{SR}}$, $_{\mathrm{PR}}$, and $_{\mathrm{ST}}$ stand for `primary transmitted', `secondary reflected', `primary reflected', and `secondary transmitted', respectively. The subscript $_{\mathrm{SH}}$  corresponds to `shadow', as this crossing will yield the expected shadow region.

\begin{figure}
\centering
\includegraphics[width=\textwidth]{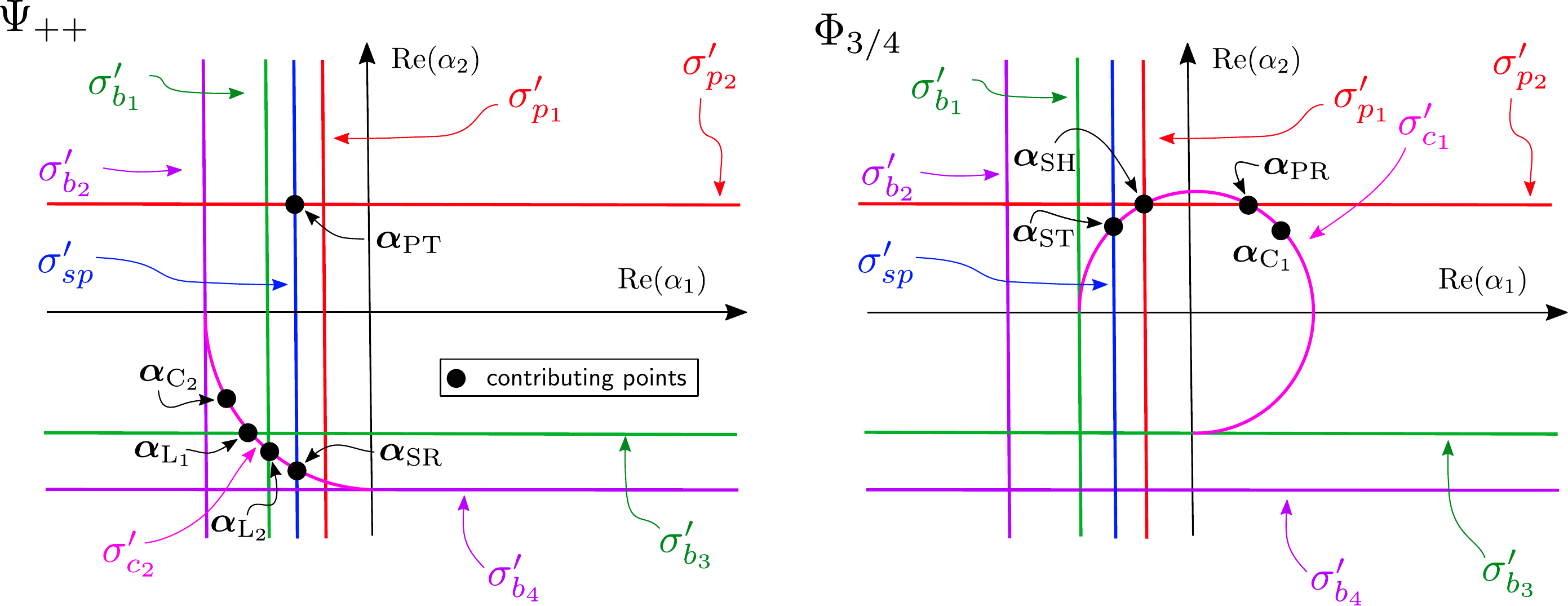}
\caption{Contributing points in the complicated case.}
\label{fig:Ch5ComplContr}
\end{figure}

The contributing points, including those which yield the diffracted waves, are, for $k_2 > k_1$, displayed in Fig. \ref{fig:Ch5ComplContr}.
Since all of the following computations are analogous to those done in the simple case, we will be brief.


\subsection{Geometrical optics components of $\psi$ and $\phi$}\label{sec:GOCompl} 

The computations yielding the wave-field's GO components in the complicated case are similar to those which yielded the transmitted wave $\psi_{\mathrm{T}_1}$: The corresponding  wave component can always be computed by solely studying the additive term in either \eqref{eq.Ch5PsiCont} or \eqref{eq.Ch5PsiCont2}, respectively. We thus only give the results, and refer to \citep{KunzThesis} for the computational details. As before, in the following discussion, the waves' subscripts correspond to the points from which they are obtained. \\

\noindent \textbf{GO components of $\psi$.} Let us assume that $k_2^2 - \a_2^2 >0$. This is always the case if $k_2 > k_1$, but when $k_1 > k_2$, we may have $k_2^2 - \a_2^2 < 0$. If that is the case, we have $\sigma_{sp}' = \emptyset$ and obtain no corresponding  wave components. Indeed, the case of 	$k_2^2 - \a_2^2 < 0$ corresponds to total internal reflection on the wedges exterior. 

We then find
\begin{align}
\psi_{\mathrm{PT}}(\x) = \frac{2 \a_1}{\a_1 - \sqrt{k_2^2 - \a_2^2}} e^{-i(-\sqrt{k_2^2-\a_2^2} x_1 + \a_2 x_2)} \mathcal{H}(x_1)\mathcal{H}(x_2), 
\end{align}
and 
\begin{multline}
\psi_{\mathrm{SR}}(\x) =   
\frac{\a_2 - \sqrt{k_1^2 -k_2^2 + \a_2^2} }
{ \a_2 + \sqrt{k_1^2 - k_2^2 + \a_2^2}} \frac{2\a_1  e^{ i (\sqrt{k_2^2 - \a_2^2}x_1 + \a_2 x_2)} }{\a_1 - \sqrt{k_2^2 - \a_2^2}} \mathcal{H}\left(\a_2 x_1 -  \sqrt{k_2^2 - \a_2^2}  x_2\right)\mathcal{H}(x_2).
\end{multline}
Both, $\psi_{\mathrm{PT}}$ and $\psi_{\mathrm{SR}}$ agree with the corresponding GO components, which are illustrated in Fig. \ref{fig:Wedge04}, right. \\

\noindent \textbf{GO components of $\phi$.} Let us now moreover assume that  $k_1^2 > k_2^2 - \a_2^2 \geq 0$. If $k_1^2 = k_2^2 - \a_2^2$, we get no contribution from the point $\aalpha_{\mathrm{ST}}$ since the crossing becomes tangential. If $k_1^2 < k_2^2 - \a_2^2$, we get no contribution since the singularity $\sigma_{sp}$ does not cross the circle $\sigma_{c_1}$. We will see that as $k_1$ approaches $ \sqrt{k_2^2 - \a_2^2}$, the secondary transmitted wave vanishes. The case $k_1 \leq \sqrt{k_2^2 - \a_2^2}$ thus corresponds to total internal reflection of the primary transmitted wave within the wedges interior. Overall, we then have
{\fontsize{9pt}{0pt} \selectfont	\begin{align}
	\phi_{\mathrm{ST}}(\x) &= 
	\frac{4 \a_1 \a_2 \times e^{ i ( \sqrt{k_2^2 - \a_2^2}x_1 - \sqrt{k_1^2-k_2^2 + \a_2^2} x_2)}}{\left(\a_1 - \sqrt{k_2^2 -\a_2^2}\right)\left(\a_2 + \sqrt{k_1^2 -k_2^2 + \a_2^2}\right)}\mathcal{H}\left(x_1 \sqrt{k_1^2 - k_2^2 + \a_2^2} + x_2 \sqrt{k_2^2 - \a_2^2} \right)\mathcal{H}(-x_2), \label{eq.Ch5PhiST} \\
	\phi_{\mathrm{PR}} (\x) &= 	\frac{\a_1 + \sqrt{k^2_2 - \a^2_2} }{\a_1 - 	\sqrt{k_2^2-\a^2_2}} e^{-i(-\mathfrak{a}_1x_1 + \mathfrak{a}_2 x_2)}\mathcal{H}(-x_1 \a_2  - x_2 \a_1) \mathcal{H}(-x_1), \label{eq.ComplRefl.Primary}
\end{align}		}
and
\begin{align}
\phi_{\mathrm{SH}}(\x) = - e^{-i(\a_1 x_1 + \a_2 x_2)} \mathcal{H}(-x_2)\mathcal{H}(x_1 \a_2 - x_2 \a_1). 
\end{align}
Note that $\phi_{\mathrm{SH}}$ annihilates the incident wave $\phi_{\iin}$ within the region $\{x_1 \a_2 - x_2 \a_1 >0 , x_2<0\}$, which leads to the expected shadow region. The formulae that are given above yield the anticipated GO wave components, which are illustrated in Fig. \ref{fig:Wedge04}, right. 

\begin{remark}
The points $\aalpha_{\mathrm{SH}}$ yields a triple crossing of singularities. To apply the framework developed in \citep{AssierShaninKorolkov2022}, it thus needs to be reduced to a double crossing, which can be done by computing the contributing behaviour of $\Phi_{3/4}$ near $\aalpha_{\mathrm{SH}}$ (see \citep{KunzThesis}). However, in general, not every triple crossing is reducible to a double crossing, and in fact, such crossings do occur in the quarter-plane problem \citep{AssierShaninKorolkov2023}, Section 4.
\end{remark}

We have thus proved the correctness of \eqref{eq.Ch1PWFarfield01} in both, the simple and the complicated case. In the complicated case, the fields' GO components, which are illustrated in Fig. \ref{fig:Wedge04} (right), are given by $\psi_{\GO} = \psi_{\mathrm{PT}} + \psi_{\mathrm{SR}}$ and $\phi_{\GO} = \phi_{\iin} + \phi_{\mathrm{PR}} + \phi_{\mathrm{ST}} + \phi_{\mathrm{SH}}$, and the diffracted far-field is as in the simple case.

\section{Concluding remarks}\label{sec:Ch5Conclusion}

In this article, we have used the machinery developed in \citep{AssierShaninKorolkov2022}  to prove the correctness of equation \eqref{eq.Ch1PWFarfield01},  and thus gave a closed-form description of the far-field encountered in the right-angled no-contrast penetrable wedge diffraction problem. Although the corresponding cylindrical and lateral diffraction coefficients remain unknown, we found that they can be expressed in terms of the spectral functions $\Psi_{++}$ and $\Phi$ as described in Sections \ref{sec:PhiDiffr} (for $\psi_{\mathrm{C}}$), \ref{sec:FirstLateral} (for $\psi_{\mathrm{L}_1}$ and $\psi_{\mathrm{L}_2}$), \ref{sec:PhiDiffr} (for $\phi_{\mathrm{C}}$) and \ref{sec:Ch5LateralPhi} (for $\phi_{\mathrm{L}_1}$ and $\phi_{\mathrm{L}_2}$). Moreover, we have shown that by imposing Sommerfeld's radiation condition via the limiting absorption principle and continuity of bypass on the scattered fields (as opposed to the diffracted fields), we recover the GO components which are predicted by Snell's law and the law of reflection.

We plan on using the system of integral equations presented in Theorem 4.2.1 of \citep{Kunz2021diffraction} to construct rational approximations of $\Psi_{++}$ and $\Phi$, akin to the scheme that was proposed by \cite{AssierAbrahams2020}. It is hoped that such approximations will allow for rapid and accurate computation of the diffraction coefficients. 

The  wave components we have derived are invalid on the GO lines of discontinuity and on the wedges interface. Describing the waves within these regions requires the study of non-isolated saddles on singularities. Developing a framework that allows for this, and obtaining a uniform far-field approximation, will be the focus of future work.

\section*{Acknowledgements}
Valentin D. Kunz would like to acknowledge funding 
by the University of Manchester (Dean's scholarship award).

\bibliographystyle{apalike}
\bibliography{bibliography}

\begin{thebibliography}{}

\bibitem[Assier and Abrahams, 2020]{AssierAbrahams2020}
Assier, R.~C. and Abrahams, I.~D. (2020).
\newblock On the asymptotic properties of a canonical diffraction integral.
\newblock {\em Proc. R. Soc. A Math. Phys. Eng. Sci.}, 476:20200150.

\bibitem[Assier and Abrahams, 2021]{AssierAbrahams2021}
Assier, R.~C. and Abrahams, I.~D. (2021).
\newblock A surprising observation on the quarter-plane diffraction problem.
\newblock {\em SIAM J. Appl. Math.}, 81(1):60--90.

\bibitem[Assier and Shanin, 2019]{AssierShanin2019}
Assier, R.~C. and Shanin, A.~V. (2019).
\newblock Diffraction by a quarter-plane. {A}nalytical continuation of spectral
  functions.
\newblock {\em Q. J. Mech. Appl. Math.}, 72(1):51--86.

\bibitem[Assier and Shanin, 2021a]{AssierShanin2021AnalyticalCont}
Assier, R.~C. and Shanin, A.~V. (2021a).
\newblock Analytical continuation of two-dimensional wave fields.
\newblock {\em Proc. R. Soc. A Math. Phys. Eng. Sci.}, 477(20200681).

\bibitem[Assier and Shanin, 2021b]{AssierShanin2021VertexGreensFunctions}
Assier, R.~C. and Shanin, A.~V. (2021b).
\newblock Vertex {G}reen’s functions of a quarter-plane. links between the
  functional equation, additive crossing and {L}am\'e functions.
\newblock {\em Q. J. Mech. Appl. Math.}, 74(3):251--295.

\bibitem[Assier et~al., 2022]{AssierShaninKorolkov2022}
Assier, R.~C., Shanin, A.~V., and Korolkov, A.~I. (2022).
\newblock A contribution to the mathematical theory of diffraction: a note on
  double {F}ourier integrals.
\newblock {\em Q. J. Mech. Appl. Math.}, 76(1):1–47.

\bibitem[Assier et~al., 2023]{AssierShaninKorolkov2023}
Assier, R.~C., Shanin, A.~V., and Korolkov, A.~I. (2023).
\newblock A contribution to the mathematical theory of diffraction. {P}art
  {II}: {R}ecovering the far-field asymptotics of the quarter-plane problem.
\newblock {\em Submitted to Q. J. Mech. Appl. Math.; arXiv:2310.18031}.

\bibitem[Babich and Mokeeva, 2008]{BabichMokeeva2008}
Babich, V.~M. and Mokeeva, N.~V. (2008).
\newblock Scattering of the plane wave by a transparent wedge.
\newblock {\em J. Math. Sci.}, 155(3):335–342.

\bibitem[Borovikov and Kinber, 1993]{BorovikovKinber1993}
Borovikov, V.~A. and Kinber, B.~Y. (1993).
\newblock {\em Geometrical theory of diffraction}.
\newblock IEE Electromagnetic Waves Series 37, London.

\bibitem[Brekhovskikh and Godin, 1999]{BrekhovskikGodin1999}
Brekhovskikh, L. and Godin, O. (1999).
\newblock {\em Acoustics of layered media {II}, 2nd edition}, volume~10 of {\em
  Wave phenomena}.
\newblock Springer, Berlin, Heidelberg.

\bibitem[Chirka, 1989]{Chirka1989}
Chirka, E.~M. (1989).
\newblock {\em Complex Analytic Sets}.
\newblock Mathematics and its Applications. Springer, Dordrecht.

\bibitem[Keller, 1962]{Keller1962}
Keller, J.~B. (1962).
\newblock Geometrical theory of diffraction.
\newblock {\em Journal of the Optical Society of America}, 52:116--130.

\bibitem[Kraut and Lehmann, 1969]{KrautLehmann1969}
Kraut, E.~A. and Lehmann, G.~W. (1969).
\newblock Diffraction of electromagnetic waves by a right-angle dielectric
  wedge.
\newblock {\em Journal of Mathematical Physics}, 10:1340--1348.

\bibitem[Kunz, 2023]{KunzThesis}
Kunz, V.~D. (2023).
\newblock A two-complex-variable approach to wave diffraction by a no-contrast
  penetrable wedge; {P}h{D} {T}hesis.
\newblock {\em Submitted for examination}.

\bibitem[Kunz and Assier, 2022]{Kunz2021diffraction}
Kunz, V.~D. and Assier, R.~C. (2022).
\newblock {D}iffraction by a {R}ight-{A}ngled {N}o-{C}ontrast {P}enetrable
  {W}edge {R}evisited: {A} {D}ouble {W}iener-{H}opf {A}pproach.
\newblock {\em SIAM J. Appl. Math.}, 82(4):1495--1519.

\bibitem[Kunz and Assier, 2023]{KunzAssierAC}
Kunz, V.~D. and Assier, R.~C. (2023).
\newblock {D}iffraction by a {R}ight-{A}ngled {N}o-{C}ontrast {P}enetrable
  {W}edge: {A}nalytical {C}ontinuation of {S}pectral {F}unctions.
\newblock {\em Q. J. Mech. Appl. Math.}, hbad002.

\bibitem[Lawrie and Abrahams, 2007]{LawrieAbrahams2007}
Lawrie, J.~B. and Abrahams, I.~D. (2007).
\newblock A brief historical perspective of the {W}iener-{H}opf technique.
\newblock {\em J. Eng. Math.}, 59:351–358.

\bibitem[Madsen and Tornehave, 1997]{MadsenTornehave1997}
Madsen, I.~H. and Tornehave, J. (1997).
\newblock {\em From Calculus to Cohomology: De Rham Cohomology and
  Characteristic Classes}.
\newblock Mathematics and its Applications. Cambridge University Press.

\bibitem[Nethercote, 2019]{NethercoteThesis}
Nethercote, M.~A. (2019).
\newblock Effective analytic and asymptotic procedures for wave diffraction by
  perfect and penetrable wedges; {P}h{D} {T}hesis.
\newblock {\em The University of Manchester}.

\bibitem[Nethercote et~al., 2020]{Nethercote2020HighContrastApprox}
Nethercote, M.~A., Assier, R.~C., and Abrahams, I.~D. (2020).
\newblock High-contrast approximation for penetrable wedge diffraction.
\newblock {\em IMA J. Appl. Math.}, 85(3):421--466.

\bibitem[Noble, 1958]{Noble1958}
Noble, B. (1958).
\newblock {\em Methods Based on the {W}iener-{H}opf Technique}.
\newblock Pergamon Press London, New York, Paris, Los Angeles.

\bibitem[Radlow, 1964]{Radlow1964PW}
Radlow, J. (1964).
\newblock Diffraction by a right-angled dielectric wedge.
\newblock {\em Int. J. Eng. Sci.}, 2:275--290.

\bibitem[Shabat, 1991]{Shabat1991}
Shabat, B.~V. (1991).
\newblock {\em Introduction to Complex Analysis Part II Functions of Several
  Variables}.
\newblock American Mathematical Society, Providence, Rhode Island.

\bibitem[Stoker, 1956]{Stoker1956}
Stoker, J.~J. (1956).
\newblock On radiation conditions.
\newblock {\em Comm. Pure Appl. Math.}, IX:577--596.

\end{thebibliography}

\end{document}